\documentclass{amsart}
\usepackage[utf8]{inputenc}
\usepackage{amsmath,pdfsync,verbatim,graphicx,epstopdf,enumerate,latexsym,amssymb,mathtools,thmtools,thm-restate,accents,cancel,amsthm}
\usepackage[usenames]{color}
\usepackage[abbrev,nobysame]{amsrefs}
\mathtoolsset{showonlyrefs} 
\usepackage[colorlinks=true]{hyperref}
\hypersetup{allcolors=blue}
\usepackage[framemethod=tikz]{mdframed}
\usepackage{tikz}
\usetikzlibrary{calc,trees,positioning,arrows,chains,shapes.geometric,decorations.pathreplacing,decorations.pathmorphing,shapes,matrix,shapes.symbols}
\newtheorem{theorem}{Theorem}[section]
\newtheorem{corollary}[theorem]{Corollary}
\newtheorem{lemma}[theorem]{Lemma}

\newtheorem{remark}[theorem]{Remark}
\newcommand{\Rn}{\mathbb{R}^n}
\newcommand{\R}{\mathbb{R}}
\newcommand{\D}{\mathrm{d}}
\newcommand{\Lc}{\mathcal{L}}
\newcommand{\Rb}{\mathbb{R}}

\newcommand{\A}{\alpha}

\newcommand{\Cb}{\mathbb{C}}

\newcommand{\ve}{\varepsilon}

\newcounter{assump}
\newenvironment{assump}{%
  \refstepcounter{assump}%
  \paragraph{\textbf{Assumption~\theassump}}%
  \enumerate
}{%
  \endenumerate
}

\renewcommand{\d}{\delta}
\newcommand{\wt}{\widetilde}
\newcommand{\Ac}{\mathcal{A}}

\newcommand{\Ec}{\mathcal{E}}
\newcommand{\Fc}{\mathcal{F}}

\newcommand{\Vc}{\mathcal{V}}
\newcommand{\Wc}{\mathcal{W}}

\newcommand{\Nb}{\mathbb{N}}

\newcommand{\I}{\mathrm{i}}

\renewcommand{\O}{\Omega}
\newcommand{\PD}{\partial}

\usepackage[normalem]{ulem}
\newcommand{\C}{\mathbb{C}}

\newcommand{\p}{\partial}
\newcommand{\lb}{\left(}
\newcommand{\rb}{\right)}


\title[Density results for biharmonic functions]{Density results of biharmonic functions on symmetric tensor fields and their applications to inverse problems}

\author{Divyansh Agrawal}
\address{Centre for Applicable Mathematics, Tata Institute of Fundamental Research, India.}
\email{agrawald@tifrbng.res.in, agrdiv01@gmail.com \newline
\href{https://sites.google.com/view/agrawald/home}{https://sites.google.com/view/agrawald/home}}

\author{Sombuddha Bhattacharyya}
\address{Department of Mathematics, Indian Institute of Science Education and Research, Bhopal, India.
}
\email{sombuddha@iiserb.ac.in, \newline
\href{https://sites.google.com/iiserb.ac.in/sombuddha/home}{https://sites.google.com/iiserb.ac.in/sombuddha/home}}

\author{Pranav Kumar}
\address{Department of Mathematics, Indian Institute of Science Education and Research, Bhopal, India.}
\email{pranav19@iiserb.ac.in, pk958675maths@gmail.com}

\begin{document}
\begin{abstract}
In this article we discuss density of products of biharmonic functions vanishing on an arbitrarily small part of the boundary. We prove that one can use three or more such biharmonic functions to construct a dense subset of smooth symmetric tensor fields up to order three, in a bounded domain. Furthermore, as an application of the density results, in dimension two or higher, we solve a partial data inverse problem for a biharmonic operator with nonlinear anisotropic third and lower order perturbations. For the inverse problem, we take the Dirichlet data to be supported in an arbitrarily small open set of the boundary and measure the Neumann data on the same set. Note that the analogous problem for linear perturbations are still unknown. 
So far, partial data problems recovering nonlinear perturbations were studied only up to vector fields \cite{lai2020partial}.
The full data analogues of the inverse problem has recently been studied in \cite{BKSU2023} for three or higher dimensions.
\end{abstract}

\subjclass[2020]{35R30,31B30,35J40,35J60,15A69}
\keywords{Semilinear elliptic PDEs, density results, tensor calculus, partial data inverse problem, biharmonic functions}

\maketitle

\section{Introduction and the main results}
Let us consider $\Omega \subset \mathbb{R}^n$, $n\geq 2$ to be an open bounded domain with smooth boundary $\partial\Omega$. Let $\Gamma \subset \partial\Omega$ be a proper closed set, i.e. $\Gamma^c := \partial\Omega\setminus \Gamma$ is a nonempty open set.
In this article we prove that using three or more smooth biharmonic functions in $\Omega$, vanishing on $\Gamma$, one can construct dense subsets of symmetric smooth tensor fields up to order $3$ on $\Omega$. 
In the context of inverse problems, the set $\Gamma\subset\PD\O$ can be regarded as an inaccessible part of the boundary.
Let us start with discussing a particular case of the density result that we prove.
Consider a symmetric $3$-tensor field $A^{(3)}(x)$ whose components $A^{(3)}_{i_1 i_2 i_3}(x)$ are smooth functions on $\overline{\Omega}$, where $i_1,i_2,i_3=1,\cdots,n$.
Let $v_k \in C^{\infty}(\overline{\Omega})$ denotes biharmonic functions, that is $(-\Delta)^2v_k=0$ in $\Omega$, for $k=0,1,2$. Let us further assume that $v_k|_{\Gamma} = \partial_{\nu} v_k|_{\Gamma} = 0$, for $k=0,1,2$.
We denote $D := \frac{1}{\I} \lb \frac{\PD}{\PD x_1}, \dots, \frac{\PD}{\PD x_n}\rb, D_j = \frac{1}{\I} \frac{\PD}{\PD x_j}$, $D^{(l)}_{i_1 \dots i_l} = \frac{1}{\I^l} \frac{\PD^l}{\PD x_{i_1} \dots \PD x_{i_l}}$ and $\partial_{\nu} = \nu\cdot \nabla_x$ where $\nu$ is the unit outward normal vector to the boundary.
If possible, let us assume
\begin{equation}\label{Intro_1}
\int_{\Omega} \sum_{i_1,i_2,i_3 = 1}^{n} A^{(3)}_{i_1 i_2 i_3}(x) \left(v_2 D^{(3)}_{i_1i_2i_3} v_1 + v_1 D^{(3)}_{i_1i_2i_3} v_2 \right)v_0 \, dx = 0,
\end{equation}
for all such $v_k$, $k=0,1,2$; then we prove that $A^{(3)}$ vanishes identically in $\Omega$.
In order to state the density result in full generality let us define the following notations.
Let us define the set
\begin{equation}\label{eq_Ec}
\Ec_{\Gamma} \coloneqq \{ u \in C^\infty(\overline{\O}): (-\Delta)^2u=0\text{ in }\O \text{ and } (u, \PD_\nu u)\lvert_{\Gamma} = 0 \}.
\end{equation}
For $l=1,2,3$, denote $A^{(l)}$ to be symmetric $l$-tensor fields with components in $C^{\infty}(\overline{\Omega})$, while $A^{(0)}$ denotes a function. Let us denote
\begin{equation}\label{Op_Int}
\begin{aligned}
\Ac_m (v_0, \dots, v_m) 
:= \sum\limits_{l=0}^3 \sum\limits_{i_1 \dots i_l = 1}^n A^{(l)}_{i_1 \dots i_l}(x) \lb \sum_{k=1}^m D^{(l)}_{i_1 \dots i_l} v_k \left( \prod\limits_{\substack{j=1\\ j \neq k}}^{m} v_j \right)\rb v_0,
\end{aligned}
\end{equation}
where $v_k \in \Ec_{\Gamma}$ for $k=0,1,\cdots,m$ and $m\geq 2$. Here and henceforth, for notational conveniences, a tensor field of order $0$ is used to denote a function, and the above summation for $l=0$ should be treated as a sum over an empty set, i.e., no summation is performed.
With these notations, we state the first density result for $\Ac_m$.
\begin{theorem}\label{MR1}
    Let $\Ac_m, \Ec_\Gamma$ be as above and $m \geq 2$. The integral identity
    \[
    \int\limits_{\O} \Ac_{m}(v_0, \dots, v_m) \, \D x = 0, \qquad \text{for all} ~v_i \in \Ec_{\Gamma}; \quad i=0,\cdots,m,
    \]
    implies that $\Ac_{m} \equiv 0$, i.e., the coefficients $A^{(l)}$ vanish identically in $\O$ for $l=0,\cdots,3$.
\end{theorem}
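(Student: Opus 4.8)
The plan is to test the integral identity against complex geometric optics (CGO) solutions of the biharmonic equation lying in $\Ec_\Gamma$, and to read off the Fourier transforms of the tensor fields $A^{(l)}$ from the behaviour of the resulting integrals as a semiclassical parameter $h\to 0^+$. The first step is to produce a sufficiently rich supply of elements of $\Ec_\Gamma$. I would fix a limiting Carleman weight $\vp$ adapted to $\Gamma$ --- for instance a linear weight $\vp(x)=\A\cdot x$ together with its convexification, or the logarithmic weight $\vp(x)=\log\abs{x-x_0}$ with $x_0$ placed on the far side of $\Gamma^c$ --- so that $\PD_\nu\vp$ has a favourable sign on $\Gamma$; then prove a Carleman estimate with boundary terms for $(-\Delta)^2$ and use it, in the Bukhgeim--Uhlmann / Kenig--Sj\"ostrand--Uhlmann manner, to solve $(-\Delta)^2v=0$ in $\O$ with $v=e^{(\vp+\I\psi)/h}(a+r)$, where $\psi$ is chosen so that $\zeta:=\nabla(\vp+\I\psi)$ is null ($\zeta\cdot\zeta=0$), the amplitude $a$ satisfies a transport equation $\zeta\cdot\nabla a=0$ --- for $(-\Delta)^2$ one may even allow the weaker $(\zeta\cdot\nabla)^2a=0$, which gives extra room --- and the remainder obeys $\lv r\rv_{H^s(\O)}=O(h)$ for $s$ chosen large depending on $n$ (to control products and third derivatives later). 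The boundary term in the Carleman estimate forces $(v,\PD_\nu v)\lvert_\Gamma=0$, so $v\in\Ec_\Gamma$.

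Next, I would use $m+1$ such solutions $v_0,\dots,v_m$ with phases $\zeta_0,\dots,\zeta_m$ and amplitudes $a_0,\dots,a_m$. Given $\xi\in\R^n$, choose the phases so that $\zeta_0+\dots+\zeta_m=-\I h\xi$ --- possible because null vectors are plentiful, with the proviso that the configuration must be picked so as not to render the leading tensor symbol degenerate --- and take $a_k=e^{\I x\cdot\eta_k}p_k(x)$ with $\eta_k$ orthogonal to $\mathrm{Re}\,\zeta_k$ and $\mathrm{Im}\,\zeta_k$ and $p_k$ polynomial, so that the transport equations hold. Substituting into $\int_\O\Ac_m(v_0,\dots,v_m)\,\D x=0$, the phase factor $e^{(\zeta_0+\dots+\zeta_m)\cdot x/h}=e^{-\I x\cdot\xi}$ is non-oscillatory, while each $D^{(l)}_{i_1\dots i_l}v_k$ contributes a leading factor $(\zeta_k)_{i_1}\cdots(\zeta_k)_{i_l}/(\I h)^l$ plus terms of lower order in $1/h$. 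Organising the identity by powers of $h$, the coefficient of $h^{-l}$ is of the form
\[
\int_\O\Big\langle A^{(l)}(x),\,\sum_{k=1}^m\zeta_k^{\otimes l}\Big\rangle\,e^{-\I x\cdot\xi}\,a_0\cdots a_m\,\D x\;+\;\big(\text{terms involving }A^{(l')},\ l'>l\big)\;=\;0.
\]

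From here I would run a descending induction on $l=3,2,1,0$. For $l=3$ the $h^{-3}$ coefficient survives alone at top order; letting $h\to 0$ and varying $\xi$, the directions of the $\zeta_k$ within the null cone subject to the constraint on their sum, the $\eta_k$, and the degrees of the $p_k$, and using polarisation together with the fact that contractions of a symmetric tensor against a sufficiently large family of null vectors determine it, one concludes $\widehat{A^{(3)}}\equiv 0$, hence $A^{(3)}\equiv 0$. With $A^{(3)}$ eliminated, the $h^{-2}$ coefficient isolates $A^{(2)}$ in exactly the same way, then $h^{-1}$ gives $A^{(1)}$, and finally $h^0$ gives $A^{(0)}$; each step reuses the same CGO family with its parameters re-tuned.

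Two points carry the real weight. The first is the boundary Carleman estimate for $(-\Delta)^2$ with a weight whose normal derivative is controlled on the \emph{arbitrarily large} set $\Gamma$, so that the CGO solutions genuinely lie in $\Ec_\Gamma$; the fourth-order boundary terms need careful handling, and this is precisely what separates the partial-data statement from its (easy) full-data analogue. The second, which I expect to be the genuine crux, is algebraic: the symbols available to us, $\sum_{k=1}^m\zeta_k^{\otimes l}$ with $\sum_k\zeta_k\approx 0$ and each $\zeta_k\cdot\zeta_k=0$, do not obviously span all symmetric $l$-tensors --- for instance, for odd $l$ and $m=2$ the naive choice $\zeta_2=-\zeta_1$ makes the principal symbol vanish --- so one must exploit the multiplicity $m\geq 2$, the amplitude freedom, and, where necessary, the subprincipal terms of the $h$-expansion to recover $\widehat{A^{(l)}}$ in full. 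This is essentially the inversion of a momentum-ray-transform-type operator on symmetric tensor fields, and it is the step I would expect to demand the most work.
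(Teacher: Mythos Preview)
Your approach differs substantially from the paper's, and its first step contains a genuine gap. You propose constructing CGO solutions in $\Ec_\Gamma$ via a boundary Carleman estimate for $(-\Delta)^2$ with a limiting Carleman weight $\varphi$ whose normal derivative has a favourable sign on $\Gamma$. But $\Gamma$ is an \emph{arbitrary} closed proper subset of $\partial\Omega$: neither a linear weight $\alpha\cdot x$ nor a logarithmic weight $\log|x-x_0|$ with pole near $\Gamma^c$ can arrange $\partial_\nu\varphi$ to have the required sign on all of $\Gamma$ in general --- a linear weight controls only roughly half of $\partial\Omega$, and a logarithmic weight only a small cap near the pole. The KSU machinery yields solutions whose Cauchy data vanish on the portion of the boundary where the weight is favourable, and for generic $\Gamma$ no single weight achieves this. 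Hence your CGO family is not known to lie in $\Ec_\Gamma$, and the subsequent Fourier-transform argument (which needs many phase directions simultaneously admissible) cannot start.

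The paper bypasses this obstruction by a local-to-global strategy in the spirit of Dos Santos Ferreira--Kenig--Sj\"ostrand--Uhlmann, and uses no Carleman estimates at all. First, an elementary symmetrisation (Lemma~\ref{equiv}) reduces Theorem~\ref{MR1} to Theorem~\ref{MR2} and, crucially, upgrades the integral identity to the \emph{pointwise} identity $\Ac^\sharp(v)\equiv 0$ for every $v\in\Ec_\Gamma$. Second, one works locally near a single point $x_0\in\Gamma^c$: after a conformal change of variables one may assume $\Gamma\subset\{x_1<-c\}$, and then the explicit biharmonic functions $e^{-\I x\cdot\xi/h}a(x)$ with $\xi\cdot\xi=0$ are corrected to lie in $\Ec_\Gamma$ by solving a Dirichlet problem for a remainder $r$ that is exponentially small on $\{x_1>-c\}$ (Lemma~\ref{construction}). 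Substituting into the pointwise identity and letting $h\to 0$ gives $\langle A^{(l)}(x),\xi^{\otimes l}\rangle=0$ for all null $\xi$ and all $x$ near $x_0$; the trace-free decomposition (Lemma~\ref{decomposition}) together with explicit null-vector lemmas (Lemmas~\ref{1tensor}--\ref{3tensor}) then forces each $A^{(l)}$ to vanish near $x_0$. Finally, a Runge-type approximation (Lemma~\ref{Runge}) and a curve argument propagate the local vanishing throughout $\Omega$. Thus your ``momentum-ray-transform'' worry is resolved not by inverting a transform but by pointwise tensor algebra on the null cone, and the arbitrary-$\Gamma$ difficulty is handled not by a global weight but by local recovery plus unique continuation.
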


The density results in this article stem from the linearized Calder\'on problem, which asks that if for a function $q \in L^{\infty}(\Omega)$, one has
\[
\int_{\Omega} q(x) u(x) v(x) \,dx = 0, \quad \forall u,v \in \{ w \in C^{\infty}(\overline{\Omega}) : (-\Delta)w = 0 \mbox{ in }\Omega \mbox{ and }w|_{\Gamma}=0\},
\]
then can one conclude that $q\equiv 0$ in $\Omega$?
A positive answer to the above question is proved in \cite{DSFKSU} where the authors have shown that the product of two harmonic functions in $\Omega$, vanishing on $\Gamma$, is dense in $L^2(\O)$ topology.
A related result worth mentioning is \cite{Katya-Uhlmann-gradient-nonlinearities} where the authors proved density in $L^2(\Omega)$ of inner product of the gradients of two harmonic functions vanishing on $\Gamma$. Some relevant density results in the context of inverse problems for semilinear elliptic equations have been discussed in \cites{Katya-Uhlmann-semilinear-elliptic,lai2020partial,Ali_Catalin_1,Ali_Catalin_2,Ali_Cataline_3}.
Recently, \cite{agrawal2023linearized} proved that using two biharmonic functions with boundary restrictions, one can prove density result concerning a smooth isotropic 2 tensor, a vector field, and a function. More precisely, if for a vector field $A^{(1)} \in C^{\infty}(\overline{\Omega};\mathbb{R}^n)$ and functions $A^{(2)},A^{(0)} \in C^{\infty}(\overline{\Omega})$ one has
\begin{equation}\label{agarwal_23}
\int_{\Omega} \left(A^{(2)}(\Delta u) + A^{(1)}\cdot\nabla u + A^{(0)}u\right)v\, dx = 0, \quad \mbox{ for all } u,v,\in \Ec_{\Gamma},
\end{equation}
then $A^{(l)}=0$ in $\Omega$ for $l=0,1,2$.
Observe that, one cannot conclude anything about the coefficient $A^{(2)}$ using only harmonic functions and that justifies considering biharmonic functions for third and second order tensors.
One possible generalization of the existing density results for a symmetric three tensor is to consider the following operator
\[
\Ac^{\sharp}(x,D) u \coloneqq \sum_{l=0}^3 \sum_{i_1 \dots i_l = 1}^n A^{(l)}_{i_1 \dots i_l}(x) D^{(l)}_{i_1 \dots i_l} u.
\]
We also adopt the convention that $D^{(0)}u = u$.
We prove the following density result for the operator $\Ac^{\sharp}(x,D)$.
\begin{theorem}\label{MR2}
    Let $\Ac^{\sharp}, \Ec_{\Gamma}$ be as above and $m\geq 2$. The integral identity
    \[
    \int_{\O} \left(\Ac^{\sharp}(x,D) v_0\right)~ v_1 \dots v_m \, \D x = 0, \qquad \text{for all } ~v_i \in \Ec_{\Gamma}, 
    \]
    implies that $A^{(l)}(x)=0$ in $\O$ for $l=0,\cdots,3$.
\end{theorem}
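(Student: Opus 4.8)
The plan is to deduce Theorem~\ref{MR2} from Theorem~\ref{MR1}. The two statements concern the same coefficients $A^{(l)}$, $l=0,\dots,3$, but paired against biharmonic functions in a different algebraic arrangement: in Theorem~\ref{MR1} the derivatives land on every one of $v_1,\dots,v_m$ symmetrically and $v_0$ multiplies the whole expression, whereas in Theorem~\ref{MR2} all derivatives fall on the single factor $v_0$ and the remaining $v_1,\dots,v_m$ enter as an undifferentiated product. The key observation is that the product $v_1\cdots v_m$ can itself be collapsed: given $v_1,\dots,v_m\in\Ec_\Gamma$, one would like to say the identity of Theorem~\ref{MR2} already implies the corresponding identity with $m$ replaced by, say, $2$ or $3$. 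More precisely, the first step is to reduce to the case of a minimal number of factors — I expect $m=3$ to be the natural target, since $\Ac^\sharp$ involves derivatives up to order $3$ — by choosing the extra biharmonic functions $v_4,\dots,v_m$ cleverly (for instance, localized bumps that are biharmonic near a point, or by using that one may multiply by fixed biharmonic functions and absorb them).

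Second, I would exploit the symmetry/polarization already built into Theorem~\ref{MR1}. Writing $w=v_1 v_2\cdots$ for a product of biharmonic functions, the expression $\bigl(\Ac^\sharp(x,D)v_0\bigr)\,w$ is, after integration by parts moving the derivatives off $v_0$, a combination of terms of the type appearing in $\Ac_m$ but with new coefficients obtained from $A^{(l)}$ by differentiation and with $w$ and its derivatives distributed across the factors. The crucial algebraic point is that the leading part (the genuine $3$-tensor piece) transforms into the leading part of an $\Ac_m$-type functional without any contribution from lower-order tensors, so Theorem~\ref{MR1} applied to that transformed functional yields $A^{(3)}\equiv 0$. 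Once $A^{(3)}=0$, the operator $\Ac^\sharp$ drops to order $2$, and one repeats: integrate by parts, recognize the result as an $\Ac_m$-type identity with $A^{(3)}$-slot empty, and conclude $A^{(2)}\equiv 0$; then likewise $A^{(1)}\equiv 0$ and finally $A^{(0)}\equiv 0$. This descent is the standard mechanism, and the point is that at each stage the top-order coefficient is isolated.

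An alternative, and perhaps cleaner, route is direct: integrate by parts in $\displaystyle\int_\O (\Ac^\sharp(x,D)v_0)\,v_1\cdots v_m\,\D x$ to transfer all derivatives off $v_0$, producing $\int_\O v_0\,\Bc(x,D)(v_1\cdots v_m)\,\D x$ for a formal transpose-type operator $\Bc$ whose coefficients are explicit combinations of $A^{(l)}$ and their derivatives. Expanding $\Bc$ applied to the product $v_1\cdots v_m$ via the Leibniz rule gives exactly a functional of the shape $\Ac_m(v_0,\dots,v_m)$ — up to identifying the new tensor coefficients — so Theorem~\ref{MR1} applies verbatim and forces those new coefficients to vanish. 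One then reads off, from the highest-order part downward, that $A^{(3)}=A^{(2)}=A^{(1)}=A^{(0)}=0$, since the linear map sending $(A^{(0)},\dots,A^{(3)})$ to the transformed coefficient tuple is triangular with invertible diagonal (the top symbol is unchanged up to sign).

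The main obstacle I anticipate is bookkeeping the transpose: after integrating by parts, the derivatives acting on the product $v_1\cdots v_m$ must be expanded by the Leibniz rule, which produces many terms in which derivatives are spread unevenly among the $v_j$'s, whereas $\Ac_m$ in \eqref{Op_Int} has a rigidly prescribed form (a single factor differentiated, the rest untouched, times $v_0$). Reconciling these — i.e., showing that the "wrong-looking" terms either cancel by symmetry or can themselves be absorbed into $\Ac_m$ with appropriately symmetrized coefficients, and that the induced map on coefficient tuples is triangular and invertible so that vanishing of the output tuple implies vanishing of $(A^{(0)},\dots,A^{(3)})$ — is the genuine content of the argument. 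Boundary terms from the integration by parts vanish because every $v_i$ satisfies $(v_i,\PD_\nu v_i)|_\Gamma=0$ and the complementary boundary piece can be handled by the same localization already used in the proof of Theorem~\ref{MR1}; I would cite that mechanism rather than redo it.
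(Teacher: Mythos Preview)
Your proposal inverts the paper's logic: in the paper, Theorem~\ref{MR1} is obtained \emph{from} Theorem~\ref{MR2} (Lemma~\ref{equiv} and Corollary~\ref{Cor_Th_1.1}), so there is no independent proof of Theorem~\ref{MR1} to cite. The paper proves Theorem~\ref{MR2} directly, in two steps: first a local result near $\Sigma=\PD\O\setminus\Gamma$ using explicit biharmonic functions $e^{-\I x\cdot\xi/h}a(x)+r$ with $\xi\cdot\xi=0$ (Lemma~\ref{construction}), together with the tensor-algebra Lemmas~\ref{1tensor}--\ref{3tensor} to peel off $A^{(3)},A^{(2)},A^{(1)},A^{(0)}$ from the powers of $1/h$; then a global step (Lemma~\ref{Lem_UCP}) using the Runge-type density of Lemma~\ref{Runge} along a curve from $\Sigma$ to an arbitrary interior point.

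Beyond the logical circularity, your integration-by-parts route has two concrete failures. First, the boundary terms do \emph{not} vanish: the condition $(v_i,\PD_\nu v_i)|_\Gamma=0$ says nothing on $\Gamma^c$, and even on $\Gamma$ a third-order integration by parts produces traces like $\PD_\nu^2 v_0|_\Gamma$ which are unconstrained; there is no ``localization mechanism in the proof of Theorem~\ref{MR1}'' to cite here since that theorem is downstream of the one you are trying to prove. Second, the Leibniz expansion of $D^{(l)}(v_1\cdots v_m)$ for $l=2,3$ produces cross terms such as $(D_i v_1)(D_j v_2)(D_k v_3)\prod_{r>3}v_r$ or $(D^{(2)}_{ij}v_1)(D_k v_2)\prod_{r>2}v_r$ which are \emph{not} of the form \eqref{Op_Int}: the functional $\Ac_m$ admits derivatives on exactly one $v_k$, the rest undifferentiated. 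These cross terms neither cancel by symmetry nor collapse into the $\Ac_m$ template with modified coefficients, so the ``triangular with invertible diagonal'' picture does not materialize. The descent you outline ($A^{(3)}\Rightarrow A^{(2)}\Rightarrow\cdots$) is exactly what the paper achieves, but via the asymptotics in $h$ of the special solutions, not via transpose-and-Leibniz.
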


In contrast to previous results, considering three or more biharmonic function with boundary restrictions, we prove density results for general symmetric tensors fields up to order three.
Note that this result is not true for $m=1$ (see \cite{SahooSalo}).
Another reason for considering the operator $\Ac^{\sharp}$ is that it allows us to get rid of the intertwining of the functions $v_1, \dots, v_m$, as compared to operator $\Ac_{m}$. 
We show that Theorem \ref{MR1} can be obtained from Theorem \ref{MR2}; see Lemma~\ref{equiv}.

Note that one cannot hope to generalize the above Theorem \ref{MR1} and \ref{MR2} for a fourth order symmetric tensor using biharmonic functions. For instance, consider a symmetric fourth order tensor field
\[
A^{(4)}_{ijkl} := \begin{cases}
    f(x) \quad \mbox{if } i=j=k=l,\\
    0 \quad \mbox{otherwise }
\end{cases}
\quad \mbox{where } f(x) \in C^{\infty}(\overline{\O}),
\]
for which the corresponding integral equation in Theorem \ref{MR2} reads
\[
\sum_{i,j,k,l=1}^n \int_{\Omega} A^{(4)}_{ijkl} \left(D^{(4)}_{ijkl} v_0\right)v_1\cdots v_m \, dx 
= \int_{\Omega} f(x) \left((-\Delta)^2v_0 \right) v_1\cdots v_m \, dx = 0.
\]
It is straightforward to see that biharmonic functions do not help in concluding anything about $A^{(4)}$.
This justifies the restriction on the order of the tensor fields in our analysis. Though density results for symmetric tensors of order four or higher remain open, the authors believe that using polyharmonic functions, i.e., $(-\Delta)^l u=0$ for $l\geq 3$, our techniques may help to generalize this result for tensor fields of order four and higher.

\subsection{An application to Inverse Problems}\label{Sec_MR_IP}
As an application of the density theorems, we solve an inverse problem of recovering all the lower order nonlinear symmetric tensorial perturbations of the bi-Laplacian operator $(-\Delta)^2$, from the knowledge of its Dirichlet to Neumann map only on a part of the boundary. 
Let $\Omega\subset \mathbb{R}^n$, $n\geq 2$ be an open bounded domain with smooth boundary $\partial\Omega$.
We consider a semilinear biharmonic partial differential operator $\mathcal{L}$ as
\begin{equation}\label{Non_Linear_Operator}
\begin{aligned}
\mathcal{L}_{A^{(0)},A^{(1)},A^{(2)},A^{(3)}}u 
:= (-\Delta)^{2}u+\sum_{l=0}^{3}\sum_{i_{1},\cdots, i_{l}=1}^{n} A^{(l)}_{i_{1}\cdots i_{l}}(x,u)D^{(l)}_{i_{1}\cdots i_{l}}u,
\end{aligned}
\end{equation}
where $A^{(l)}(x,z)$ are holomorphic in the $z$ variable and smooth in the $x$ variable. More precisely, the coefficients $A^{(l)}: \overline{\Omega}\times \Cb\to S^l$, for $l=0,\cdots,3$ satisfy:
\begin{assump}\label{Assumption}
\item\label{Asmp_1} The map $\C\ni z\mapsto A^{(l)}(\cdot,z)$ is holomorphic with values in $C^{\infty}(\overline{\Omega};S^l)$,
\item\label{Asmp_2} $A^{(l)}(x,0)=0$, for $l=0,\cdots,3$,
for all $x\in \overline{\O}$. 
\end{assump}
Here, $S^l$ denotes the space of all symmetric $l$-tensors in $\Rn$ for $l=0,\cdots,3$. See \cite{BKSU2023} for an inverse problem concerning such an operator in dimensions $3$ or higher.
Consider the Dirichlet problem
\begin{equation}\label{Dirichlet_Problem}
\begin{aligned}
\begin{cases}
\mathcal{L}_{A^{(0)},A^{(1)},A^{(2)}, A^{(3)}} u =0 & \text { in } \Omega,\\
\left(u, \partial_\nu u\right)\qquad \quad \,\,\, = \left(f, g\right) &\text { on } \partial \Omega.
\end{cases}
\end{aligned}
\end{equation}
Note that under the Assumption~\ref{Assumption}, \eqref{Dirichlet_Problem} is well-posed in dimensions $n \geq 2$ (see Subsection~\ref{well-posed}) for $(f,g) \in V_{\delta}(\PD\O)$ for $\delta>0$ small enough, where 
\begin{equation}\label{eq_v_delta}
V_{\delta}(\p\O) := \{(f,g)\in C^{4,\A}(\p\O) \times C^{3,\A}(\p\O) : \lVert f \rVert_{C^{4,\A}(\p\O)} + \lVert g \rVert_{C^{3,\A}(\p\O)} <\delta \}.
\end{equation}

Let $\Gamma\subset \partial\Omega$ be an arbitrary non-empty, closed, proper subset. We define the partial Dirichlet--to--Neumann map(DN-map) associated to \eqref{Dirichlet_Problem} as
\begin{equation}\label{Partial-DN-map}
\mathcal{N}^{\Gamma}_{A^{(0)},A^{(1)},A^{(2)},A^{(3)}}(f, g) :=(\left.\partial_{\nu}^{2}u, \partial_{\nu}^{3} u\right)|_{\Gamma^c},
\end{equation}
where $f,g \in V_{\delta}(\p\O)$, $f|_{\Gamma},g|_{\Gamma} = 0$ and $u$ is the unique solution to \eqref{Dirichlet_Problem}.
We prove that the partial DN-map $\mathcal{N}^{\Gamma}_{A^{(0)},A^{(1)},A^{(2)}A^{(3)}}$ uniquely determines the nonlinear perturbations $A^{(0)},A^{(1)}$, $A^{(2)}$ and $A^{(3)}$ in $\Omega$.
Here we formally state a partial data inverse problem concerning $\Lc_{A^{(0)},A^{(1)},A^{(2)}, A^{(3)}}$.
\begin{theorem}\label{Main-thm}
Let $\Omega\subset \R^{n}$, $n\geq2$, be a bounded open set with smooth boundary, and let $\Gamma\subset \partial\Omega$ be an arbitrary closed set. 
Let $\mathcal{L}_{A^{(0)},A^{(1)}, A^{(2)}, A^{(3)}}$ and $\mathcal{L}_{\widetilde{A}^{(0)}\widetilde{A}^{(1)},\widetilde{A}^{(2)},\widetilde{A}^{(3)}}$ be two operators defined as in \eqref{Non_Linear_Operator} with the coefficients satisfying Assumption~\ref{Assumption}. If the corresponding partial DN maps satisfy
    \[ \mathcal{N}^{\Gamma}_{A^{(0)},A^{(1)},A^{(2)},A^{(3)}}(f,g)=\mathcal{N}^{\Gamma}_{\widetilde{A}^{(0)},\widetilde{A}^{(1)},\widetilde{A}^{(2)},\widetilde{A}^{(3)}}(f,g),\]
    for all $(f,g)\in V_\delta(\partial \Omega)$ with $(f,g)|_{\Gamma}=0$, then $A^{(l)}=\widetilde{A}^{(l)}$, for $l=0,\cdots,3$, in $\overline{\Omega}\times \C$.
\end{theorem}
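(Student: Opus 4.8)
The plan is to combine a higher order linearization of the boundary value problem~\eqref{Dirichlet_Problem} with the density result of Theorem~\ref{MR1}, applied once at each order of linearization. \textbf{Setting up the linearizations.} First I would invoke the well-posedness of \eqref{Dirichlet_Problem} and the holomorphic dependence of its solution on a finite-dimensional parameter, as established in Subsection~\ref{well-posed}: for boundary data $(f,g)=\sum_{j=1}^{m}\ve_j(f_j,g_j)$ with each $(f_j,g_j)$ smooth and vanishing on $\Gamma$, and $\ve=(\ve_1,\dots,\ve_m)$ sufficiently small, the corresponding small solution $u=u(\cdot\,;\ve)$ is holomorphic in $\ve$, with $u(\cdot\,;0)\equiv0$ by Assumption~\ref{Asmp_2}. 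Set $v_j:=\partial_{\ve_j}u|_{\ve=0}$ and $w^{(p)}:=\partial_{\ve_1}\cdots\partial_{\ve_p}u|_{\ve=0}$ for $p\ge2$, and let $\wt{v}_j,\wt{w}^{(p)}$ denote the analogous objects attached to $\Lc_{\wt{A}^{(0)},\dots,\wt{A}^{(3)}}$. Differentiating $\Lc u=0$ once in $\ve_j$ and using $u(\cdot\,;0)\equiv0$ together with Assumption~\ref{Asmp_2} shows $(-\Delta)^2 v_j=0$ in $\O$ with Cauchy data $(v_j,\partial_\nu v_j)|_{\p\O}=(f_j,g_j)$; by elliptic regularity (the data being smooth) $v_j\in\Ec_\Gamma$, and conversely every element of $\Ec_\Gamma$ arises as such a first linearization up to a scalar rescaling, since its Cauchy data is smooth and vanishes on $\Gamma$. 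As the linear biharmonic Dirichlet problem is uniquely solvable, $v_j=\wt{v}_j$, so the first order of linearization yields no information.

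\textbf{The inductive step.} I would then prove by induction on $m\ge2$ the statement
\[
P(m):\qquad \partial_z^{\,m-1}A^{(l)}(\cdot\,,0)=\partial_z^{\,m-1}\wt{A}^{(l)}(\cdot\,,0)\quad(l=0,\dots,3),\qquad w^{(p)}=\wt{w}^{(p)}\quad(2\le p\le m),
\]
the base case being $A^{(l)}(\cdot\,,0)=\wt{A}^{(l)}(\cdot\,,0)=0$ together with $v_j=\wt{v}_j$. Applying $\partial_{\ve_1}\cdots\partial_{\ve_m}|_{\ve=0}$ to $\Lc u=0$ with $m$ parameters, and using $u=\sum_j\ve_j v_j+O(\ve^2)$, $A^{(l)}(x,0)=0$ and $D^{(l)}u|_{\ve=0}=0$, a Fa\`{a} di Bruno expansion yields
\[
(-\Delta)^2 w^{(m)}=-\sum_{l=0}^{3}\sum_{i_1\dots i_l=1}^{n}\partial_z^{\,m-1}A^{(l)}_{i_1\dots i_l}(x,0)\lb\sum_{k=1}^{m}D^{(l)}_{i_1\dots i_l}v_k\prod_{\substack{j=1\\ j\ne k}}^{m}v_j\rb+R_m\quad\text{in }\O,
\]
with $(w^{(m)},\partial_\nu w^{(m)})|_{\p\O}=0$, where $R_m$ depends only on the $v_j$, on the lower-order linearizations $w^{(p)}$ with $p\le m-1$, and on the Taylor coefficients $\partial_z^{\,q}A^{(l)}(\cdot\,,0)$ with $q\le m-2$. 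By $P(m-1)$ these data coincide with the corresponding data of the tilde operator, so upon subtracting the identity for $\wt{w}^{(m)}$ the remainders cancel and the difference $w_0:=w^{(m)}-\wt{w}^{(m)}$ satisfies
\[
(-\Delta)^2 w_0=-\sum_{l=0}^{3}\sum_{i_1\dots i_l=1}^{n}B^{(l)}_{i_1\dots i_l}(x)\lb\sum_{k=1}^{m}D^{(l)}_{i_1\dots i_l}v_k\prod_{\substack{j=1\\ j\ne k}}^{m}v_j\rb\quad\text{in }\O,
\]
with $B^{(l)}:=\partial_z^{\,m-1}A^{(l)}(\cdot\,,0)-\partial_z^{\,m-1}\wt{A}^{(l)}(\cdot\,,0)$ and $(w_0,\partial_\nu w_0)|_{\p\O}=0$; differentiating the hypothesis $\Nc^{\Gamma}_{A^{(0)},\dots,A^{(3)}}=\Nc^{\Gamma}_{\wt{A}^{(0)},\dots,\wt{A}^{(3)}}$ $m$ times in $\ve$ gives in addition $(\partial_\nu^2 w_0,\partial_\nu^3 w_0)|_{\Gamma^c}=0$.

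\textbf{Extracting the coefficients and concluding.} For an arbitrary $v_0\in\Ec_\Gamma$, I would integrate the equation for $w_0$ against $v_0$ and apply Green's identity for $(-\Delta)^2$. Its boundary term involves only the Cauchy data of $w_0$ and of $v_0$ up to order three; on $\Gamma$ both $(w_0,\partial_\nu w_0)$ and $(v_0,\partial_\nu v_0)$ vanish, while on $\Gamma^c$ all of $w_0,\partial_\nu w_0,\partial_\nu^2 w_0,\partial_\nu^3 w_0$ vanish, so the boundary term is zero; since also $(-\Delta)^2 v_0=0$, this gives
\[
\int_{\O}\sum_{l=0}^{3}\sum_{i_1\dots i_l=1}^{n}B^{(l)}_{i_1\dots i_l}(x)\lb\sum_{k=1}^{m}D^{(l)}_{i_1\dots i_l}v_k\prod_{\substack{j=1\\ j\ne k}}^{m}v_j\rb v_0\,\D x=0.
\]
Since $v_1,\dots,v_m$ range, up to rescaling, over all of $\Ec_\Gamma$ and $v_0$ is an arbitrary element of $\Ec_\Gamma$, multilinearity removes the scalings and this is exactly $\int_\O\Ac_m(v_0,\dots,v_m)\,\D x=0$ with coefficients $B^{(l)}$; Theorem~\ref{MR1} then forces $B^{(l)}\equiv0$, i.e.\ $\partial_z^{\,m-1}A^{(l)}(\cdot\,,0)=\partial_z^{\,m-1}\wt{A}^{(l)}(\cdot\,,0)$, after which $(-\Delta)^2 w_0=0$ with zero Cauchy data on $\p\O$ gives $w^{(m)}=\wt{w}^{(m)}$; thus $P(m)$ holds. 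Running this for all $m\ge2$ and using Assumption~\ref{Assumption} (holomorphicity in $z$, vanishing at $z=0$) shows that the Taylor expansions of $A^{(l)}(x,\cdot)$ and $\wt{A}^{(l)}(x,\cdot)$ at $0$ agree, hence $A^{(l)}=\wt{A}^{(l)}$ on $\overline{\O}\times\C$ for $l=0,\dots,3$.

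\textbf{Main obstacle.} I expect the delicate point to be the bookkeeping in the inductive step: one must verify that the $m$-th order multilinear derivative of the nonlinearity reproduces exactly the symmetrized expression occurring in $\Ac_m$, with coefficient $\partial_z^{\,m-1}A^{(l)}$, and --- crucially --- that the remainder $R_m$ involves only strictly lower-order Taylor coefficients of the $A^{(l)}$ and strictly lower-order linearizations, so that subtracting the tilde equation closes the induction. The remaining ingredients --- well-posedness and analytic dependence on $\ve$ (Subsection~\ref{well-posed}), elliptic regularity placing the first linearizations in $\Ec_\Gamma$, and the biharmonic Green's identity --- are routine, and all of the density content is packaged in Theorem~\ref{MR1}.
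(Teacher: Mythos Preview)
Your proposal is correct and follows essentially the same approach as the paper: higher-order linearization in a finite-dimensional parameter $\ve$, induction on the order $m\ge 2$ with the hypothesis that the Taylor coefficients $\partial_z^{k}A^{(l)}(\cdot,0)$ agree for $k\le m-2$ so that the remainder terms cancel upon subtraction, multiplication of the resulting equation for $w^{(m)}-\wt w^{(m)}$ by an arbitrary $v_0\in\Ec_\Gamma$ and integration by parts using the partial Cauchy data from the DN-map hypothesis, and finally an application of Theorem~\ref{MR1} followed by analyticity in $z$. The only cosmetic difference is that you package the equality of lower-order linearizations $w^{(p)}=\wt w^{(p)}$ into the induction hypothesis $P(m)$ explicitly, whereas the paper derives it each time from \eqref{Hyp_m-2} and well-posedness; your identification of the ``main obstacle'' (the Fa\`a di Bruno bookkeeping showing $R_m$ depends only on strictly lower data) is exactly the point the paper handles by writing out $H_m$ explicitly.
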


\begin{remark}
A closely related partial data inverse problem is to determine the nonlinear coefficients of \eqref{Non_Linear_Operator} from the Dirichlet data supported in $\Sigma_1$ and the Neumann data measured on $\Sigma_2$, where $\Sigma_1,\Sigma_2$ are arbitrary nonempty open subsets of $\p\O$. As long as $(\Sigma_1 \cap \Sigma_2)$ is nonempty, Theorem \ref{Main-thm} can be applied on $\Gamma = (\Sigma_1 \cap \Sigma_2)^c$ to determine all the coefficients.
\end{remark}

\begin{remark}
Though it is an open question at the moment, the authors believe that the inverse problem in Theorem \ref{Main-thm} can be generalized for a polyharmonic operator $(-\Delta)^m$ for $m\geq 2$ with higher order tensorial perturbations for higher values of $m$.
\end{remark}

Partial data inverse problems, that is inverse problems with information known only on a part of the boundary, are arguably more suitable for the purposes of practical applications. Starting with the phenomenal work \cite{KSU2007}, partial data inverse problems have gained lots of attention in the last few decades. Some of the early progresses came from \cites{ziqi_Sun,DKJU2007 ,KLU_biharmonic2012,Ghosh_Venky2016,Assylbekov_Yang_polyharmonic} and the references therein, where the Neumann data is measured only on a part of the boundary without any restriction on the Dirichlet data. Significant progress has been made in inverse problems with restricting the Dirichlet as well as the Neumann data (see, for instance, \cites{KSU2007,Isakov2007,DSFKSU,Chung_partial2014,Bhattacharyya_partial2018,Mihajlo2020,Katya-Uhlmann-gradient-nonlinearities,Katya-Uhlmann-semilinear-elliptic,Bhattacharyya-Kumar_local_data_polyharmonic}). On the other hand, \cites{KLU_biharmonic2012,KLU_polyharmonic2014} pioneered the study of inverse problems recovering perturbations of polyharmonic operators and were soon followed by an extensive literature, \cites{Ghosh_Venky2016,Assylbekov_Yang_polyharmonic,Bhattacharyya-Ghosh_polyharmonic,Bhattacharyya-Ghosh_biharmonic,Bhattacharyya-Krishnan-Sahoo_polyharmonic,Bhattacharyya-Kumar_local_data_polyharmonic,BKSU2023} to name a few. It is worth mentioning that the recent work \cite{BKSU2023} recovers all the nonlinear perturbations of a biharmonic operator governed by third and lower order symmetric tensors from the full boundary information in three or higher dimensions.
A number of recent articles show that nonlinearity might help when solving inverse problems for Partial Differential Equation(PDE). Starting with \cite{KLU18}, similar phenomena have been observed and utilized in \cite{Feizmohammadi_Oksanen_2020,LLLS21}. We refer the reader to \cites{Kian-katya-Uhlmann_quasilinear_conductivity,Katya-Uhlmann-gradient-nonlinearities,Katya-Uhlmann-semilinear-elliptic,KU_CTA23,KUY_complex24,lai2020partial,MU20,LLLS21,LLST_Fractional_22,LL23,Salo_Tzou23} for the recent progresses on inverse problems for nonlinear elliptic PDE.

\subsection*{Novelty of the article}Here we take the opportunity to discuss the contributions of this article. Firstly, we prove density results in Theorem \ref{MR1} and Theorem \ref{MR2} involving second and third order symmetric tensors which, in contrast to many of the existing results, do not assume any boundary conditions for the tensors. Although we do not pursue optimal regularity of the tensor fields $A^{(l)}$, it can be substantially reduced to $C^{4}(\overline{\Omega})$.
To the best of the authors knowledge, Theorem \ref{MR1} and Theorem \ref{MR2} are the first density results for a general symmetric second and a third order tensor fields using biharmonic functions vanishing on an arbitrary closed subset of the boundary.
Finally, in Theorem \ref{Main-thm} we discuss an inverse problem for a biharmonic operator with lower order nonlinear perturbations up to order three. We describe how the density results are crucially used to solve the inverse problem. To the best of the authors knowledge, Theorem \ref{Main-thm} is the first result for biharmonic operators with nonlinear perturbations from partial boundary data.
It is worth mentioning here that all the results in this article hold true in any dimensions greater or equals to two. This gives a unified method for inverse problems of recovering nonlinear perturbations in any dimension.

\subsection*{Outline of the article}The rest of the article is organized as follows. We collect and review some preliminary results in Section~\ref{prelim}, which are crucial in our proofs. Section~\ref{proofs} is dedicated to the proofs of Theorems~\ref{MR1} and \ref{MR2}. More precisely, subsection~\ref{Sec_reduction} shows that Theorem~\ref{MR1} follows from Theorem~\ref{MR2}. Subsections~\ref{Sec_local} and \ref{Sec_UCP} contain proofs of local and global versions of Theorem~\ref{MR2} respectively. Finally in Section~\ref{IP}, these density results are utilized to prove Theorem~\ref{Main-thm}.

\section{Preliminaries}\label{prelim}
In this section, we present some results for easy reference and convenience of the reader. Some of these results are new and others taken from previous literature, with reference to the source.

\subsection{Well-posedness of the PDE (\ref{Dirichlet_Problem})}\label{well-posed}
For a result on the well-posedness of \eqref{Dirichlet_Problem}, in dimensions $3$ or higher, see \cite{BKSU2023}. The arguments present there seamlessly extend for well-posedness in dimension $2$ as well, see \cite{Katya-Uhlmann-gradient-nonlinearities,LLLS21}. For the sake of completeness, here we state the well-posedness result and provide a brief sketch of the proof.
\begin{theorem}\cite[Theorem A.1]{BKSU2023}\label{th:well_posedness}
    Let $\Omega\subset \mathbb{R}^n$, $n\geq2$, $A^{(l)}:\overline{\Omega}\times \C\to S^{l}$, $l=0,1,2,3$ satisfying Assumption~\ref{Assumption}. Let us denote $C^{k,\alpha}(\overline{\Omega})$ and $C^{k,\alpha}(\partial\Omega)$, for $k\in \mathbb{N}\cup \{0\}$, $0<\alpha<1$, to be the standard H\"older spaces of functions on $\overline{\Omega}$ and $\partial\Omega$, respectively. Then the following statements are true
\begin{itemize}
    \item [1.] There exists $\delta>0$, $C>0$ such that for any $(f,g) \in V_{\delta}(\p\O)$ (see \eqref{eq_v_delta}),
    the boundary value problem \eqref{Dirichlet_Problem} has a solution  $u=u_{f,g}\in C^{4,\A}(\overline{\O})$ satisfying
\[  \lVert u \rVert_{C^{4,\A}(\overline{\O})} 
\leq C\left(\lVert f\rVert_{C^{4,\alpha}(\p\O)} 
+\lVert g\rVert_{C^{3,\alpha}(\p\O)} \right).
\]
\item [2.] The solution $u_{f,g}$ is unique in the class 
\[  \{u\in C^{4,\alpha}(\overline{\O}): \lVert u \rVert_{C^{4,\alpha}(\overline{\O})} <C\delta\},
\] and it depends holomorphically on  $(f,g)\in V_{\delta}(\p\O)$.
\item [3.] The map $V_{\delta}(\p\O)\ni (f,g)\longmapsto(\p_{\nu}^2u|_{\p\O},\p_{\nu}^3 u|_{\p\O}) \in C^{2,\alpha}(\p\O) \times  C^{1,\alpha}(\p\O) $ is holomorphic.
\end{itemize}
\end{theorem}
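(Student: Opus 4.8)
The plan is to realize the boundary value problem \eqref{Dirichlet_Problem} as the zero set of a holomorphic map between Banach spaces and then invoke the holomorphic implicit function theorem, linearizing at the trivial solution $u\equiv 0$. Concretely, on a small ball $B_r$ around $0$ in $C^{4,\A}(\overline{\O})$ I would introduce
\[
G(u,(f,g)) := \Big(\Lc_{A^{(0)},A^{(1)},A^{(2)},A^{(3)}}u,\ u|_{\PD\O}-f,\ \PD_\nu u|_{\PD\O}-g\Big),
\]
regarded as a map $B_r\times\big(C^{4,\A}(\PD\O)\times C^{3,\A}(\PD\O)\big)\to C^{0,\A}(\overline{\O})\times C^{4,\A}(\PD\O)\times C^{3,\A}(\PD\O)$. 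By Assumption~\ref{Asmp_2}, $\Lc_{A^{(0)},A^{(1)},A^{(2)},A^{(3)}}0=0$, hence $G(0,(0,0))=0$, and solving \eqref{Dirichlet_Problem} with data $(f,g)$ is the same as finding a zero of $G(\cdot,(f,g))$.

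The argument then rests on two inputs. The first is the linear Schauder theory: the map $v\mapsto\big((-\Delta)^2v,\ v|_{\PD\O},\ \PD_\nu v|_{\PD\O}\big)$ is a Banach space isomorphism from $C^{4,\A}(\overline{\O})$ onto $C^{0,\A}(\overline{\O})\times C^{4,\A}(\PD\O)\times C^{3,\A}(\PD\O)$; this follows from the Agmon--Douglis--Nirenberg estimates for the properly elliptic operator $(-\Delta)^2$ with Dirichlet boundary conditions (which satisfy the Lopatinskii--Shapiro complementing condition), together with uniqueness for the biharmonic Dirichlet problem (integrating $\int_\O|\Delta v|^2\,\D x$ by parts). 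This input holds in every dimension $n\ge 1$, which is exactly why the proof of \cite{BKSU2023} applies without change when $n=2$. The second, and the only delicate, input is that $G$ is holomorphic. Since $(-\Delta)^2$ and the trace maps are bounded and linear, it is enough to prove that each term $u\mapsto A^{(l)}_{i_1\dots i_l}(\cdot,u)\,D^{(l)}_{i_1\dots i_l}u$ is holomorphic from $B_r$ into $C^{0,\A}(\overline{\O})$. Writing the Taylor series $A^{(l)}(x,z)=\sum_{j\ge 1}a^{(l)}_j(x)z^j$ with $a^{(l)}_j(x)=\tfrac{1}{j!}\PD_z^jA^{(l)}(x,0)$, Assumption~\ref{Asmp_1} gives $a^{(l)}_j\in C^{\infty}(\overline{\O};S^l)$ with Cauchy bounds $\lVert a^{(l)}_j\rVert_{C^{4,\A}(\overline{\O})}\le M\rho^{-j}$; using that $C^{4,\A}(\overline{\O})$ is a Banach algebra, the series $\sum_j a^{(l)}_j u^j$ converges in $C^{4,\A}(\overline{\O})$ for $\lVert u\rVert_{C^{4,\A}(\overline{\O})}$ small, so $u\mapsto A^{(l)}(\cdot,u)$ is holomorphic into $C^{4,\A}(\overline{\O};S^l)$. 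Composing with the bounded bilinear product $C^{4,\A}(\overline{\O})\times C^{4-l,\A}(\overline{\O})\to C^{4-l,\A}(\overline{\O})\hookrightarrow C^{0,\A}(\overline{\O})$ and with the bounded linear map $u\mapsto D^{(l)}u$ gives the claim; here $l\le 3$ enters precisely so that $D^{(l)}u$ remains at least Hölder continuous.

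To finish, I would compute the partial derivative $D_uG(0,(0,0))$: differentiating $A^{(l)}(\cdot,u)\,D^{(l)}u$ in $u$ at $u=0$ produces $\big(\PD_zA^{(l)}(\cdot,0)v\big)\,D^{(l)}0+A^{(l)}(\cdot,0)\,D^{(l)}v$, and both summands vanish --- the first because $D^{(l)}0=0$, the second by Assumption~\ref{Asmp_2} --- so $D_uG(0,(0,0))v=\big((-\Delta)^2v,\ v|_{\PD\O},\ \PD_\nu v|_{\PD\O}\big)$, an isomorphism by the first input. The holomorphic implicit function theorem then produces $\delta>0$ and a holomorphic map $V_\delta(\PD\O)\ni(f,g)\mapsto u_{f,g}\in C^{4,\A}(\overline{\O})$, with $u_{0,0}=0$, solving \eqref{Dirichlet_Problem} and unique in a fixed $C^{4,\A}(\overline{\O})$-ball; since $u_{0,0}=0$ and a holomorphic map has bounded derivative near the origin, one obtains $\lVert u_{f,g}\rVert_{C^{4,\A}(\overline{\O})}\le C\big(\lVert f\rVert_{C^{4,\A}(\PD\O)}+\lVert g\rVert_{C^{3,\A}(\PD\O)}\big)$, which gives statements 1 and 2. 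For statement 3, the higher order Neumann trace $u\mapsto(\PD_\nu^2u|_{\PD\O},\PD_\nu^3u|_{\PD\O})$ is bounded and linear from $C^{4,\A}(\overline{\O})$ into $C^{2,\A}(\PD\O)\times C^{1,\A}(\PD\O)$, hence holomorphic, and composing it with $(f,g)\mapsto u_{f,g}$ yields holomorphy of the partial DN map. I expect the main obstacle to be the holomorphy of the superposition operators $u\mapsto A^{(l)}(\cdot,u)$ between Hölder spaces (handled above via the Cauchy-estimated Taylor expansion and the Banach algebra structure); the remaining ingredients are classical elliptic theory and routine functional analysis.
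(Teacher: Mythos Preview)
Your proposal is correct and follows essentially the same route as the paper: define the map $G(u,(f,g))=(\Lc u,\,u|_{\PD\O}-f,\,\PD_\nu u|_{\PD\O}-g)$, verify holomorphy via the Taylor expansion of the coefficients, compute $D_uG(0,(0,0))v=((-\Delta)^2v,\,v|_{\PD\O},\,\PD_\nu v|_{\PD\O})$ and cite the linear biharmonic Dirichlet theory for the isomorphism, then apply the holomorphic implicit function theorem. If anything, you supply more detail than the paper's sketch (which defers holomorphy to \cite{BKSU2023} and the isomorphism to \cite{Gazzola_Polyharmonic_book_2010}), particularly in the Cauchy-bound argument for the superposition operators and in checking that both summands in the linearization vanish.
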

\begin{proof}[Sketch of proof]
    Due to Assumption~\ref{Assumption}, the coefficients can be expanded into the power series
    \[
    A^{(l)}(\cdot, z) = \sum\limits_{k=1}^\infty \frac{1}{k!} z^k \PD_z^k A^{(l)} (\cdot, 0),
    \]
    converging in $C^{m,\alpha}(\overline{\O}; S^{(l)})$ topology for each $m$. Consider the map 
    \begin{equation}
        \begin{split}
            F: C^{4,\alpha}(\PD \O) \times &C^{3,\alpha}(\PD \O) \times C^4(\overline{\O}) \to C^{0,\alpha}(\overline{\O}) \times C^{4,\alpha}(\PD \O) \times C^{3,\alpha}(\PD \O) \\
            F(f,g,u) &= (\Lc_{A^{(0)}, A^{(1)}, A^{(2)}, A^{(3)}} u, u\lvert_{\PD \O} - f, \PD_\nu u\lvert_{\PD \O} - g).
        \end{split}
    \end{equation}
    As in \cite{BKSU2023}, $F$ is holomorphic as a map between Banach spaces (equipped with the product norm and the induced topology), and its partial differential 
    \begin{equation}
        \PD_u F(0,0,0) : C^{4,\alpha}(\overline{\O}) \to C^{0,\A}(\overline{\O}) \times C^{4,\A}(\PD \O) \times C^{3,\A}(\PD \O)
    \end{equation}
    is given as
    \[
    \PD_u F(0,0,0) v = ((-\Delta)^2 v, v\lvert_{\PD \O}, \PD_\nu v\lvert_{\PD \O}),
    \]
    which is a linear isomorphism (\cite[Theorem~2.19]{Gazzola_Polyharmonic_book_2010}). Invoking Implicit function theorem (see \cite[p. 144]{Inverse_spectral_theory}), we find that there exists $\delta > 0$ and a unique holomorphic map $S: V_\delta(\PD \O) \to C^{4,\A}(\overline{\O})$ such that $S(0,0) = 0$ and $F(f,g, S(f,g)) = 0$ for all $(f,g) \in V_\delta(\PD \O)$. Setting $u = S(f,g)$ and noting that $S$ is Lipschitz continuous, we obtain the desired estimate.
\end{proof}

\subsection{Calculus of symmetric tensor fields}
In this subsection we prove a few tensor algebraic results which play a crucial role in handling the tensorial coefficients (see Lemma \ref{local}).
To the best of the authors knowledge, the results in this subsection (excluding Lemma \ref{decomposition}) are not available in the literature.
We start with recalling standard notations for contraction and its dual for symmetric tensors.
We denote $i_\d: S^{j} \to S^{j+2}$ and its dual $j_\d: S^j \to S^{j-2}$ as
\begin{align*}
    (i_\d f)_{i_1 \dots i_{j+2}} = \sigma \lb f_{i_1 \dots i_{j}} \otimes \d_{i_{j+1} i_{j+2}} \rb, \qquad
    (j_\d f)_{i_1 \dots i_{j-2}} = \sum\limits_{i_{j-1}, i_j = 1}^n f_{i_1 \dots i_{j}} \d_{i_{j-1} i_{j}},
\end{align*}
where $\otimes$ denotes the tensor product of two tensors, $\sigma$ denotes the symmetrization operator and $\d_{ij}$ denotes the Kronecker delta tensor. Any symmetric tensor (or tensor field) satisfying $j_\d f = 0$ is called \emph{trace-free}. By convention, $j_\d f = 0$ for $f \in S^{0}$ and $S^1$. Let us recall the decomposition of a symmetric tensor field into isotropic and trace-free parts (see \cite{DS,PSU}).
\begin{lemma}\label{decomposition}
    A tensor field $A^{(l)} \in C^\infty(\overline{\O}; S^l)$ can be uniquely written as
    \begin{equation}
    A^{(l)}(x) = \sum\limits_{k=0}^{[l/2]} i_\d^k A^{(l)}_{l-2k}(x),
    \end{equation}
    where $A^{(l)}_{l-2k} \in C^\infty(\overline{\O}; S^{l-2k})$, and $j_\d A^{(l)}_{l-2k} = 0$ for each $k \geq 0$.
\end{lemma}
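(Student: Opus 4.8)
\textbf{Proof proposal for Lemma~\ref{decomposition}.}

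The plan is to establish existence and uniqueness of the decomposition $A^{(l)} = \sum_{k=0}^{[l/2]} i_\d^k A^{(l)}_{l-2k}$ with each component trace-free. The cleanest route is to proceed by induction on $l$ and to work pointwise, since the operators $i_\d$ and $j_\d$ act algebraically on the fibers $S^l$; smoothness of the components then follows automatically because they are obtained from $A^{(l)}$ by applying fixed linear algebraic operations (compositions of $i_\d$, $j_\d$, and scalar multiplication), which preserve $C^\infty(\overline{\O})$ regularity. For $l = 0$ and $l = 1$ there is nothing to prove since $j_\d$ is declared zero there, so $A^{(l)}$ is already trace-free and the sum has a single term.

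For the inductive step, the key observation is that the image $\operatorname{im}(i_\d) \subset S^l$ and the kernel $\ker(j_\d) \subset S^l$ (the trace-free tensors) give a direct sum decomposition $S^l = \ker(j_\d) \oplus \operatorname{im}(i_\d)$. To see this, first I would compute the composition $j_\d i_\d$ on $S^{l-2}$: a direct calculation using the definitions of symmetrization and contraction with $\d_{ij}$ shows that $j_\d i_\d = c_{l} \cdot \mathrm{Id} + (\text{lower-order trace terms})$, or more precisely that $j_\d i_\d$ is an invertible operator on $S^{l-2}$ (its invertibility is a standard fact, with the relevant constant depending on $l$ and $n$; one can also cite \cite{DS} or \cite{PSU} for this). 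Invertibility of $j_\d i_\d$ immediately gives that $i_\d$ is injective on $S^{l-2}$ and that $\ker(j_\d) \cap \operatorname{im}(i_\d) = 0$. A dimension count, or equivalently the explicit projector $A^{(l)} \mapsto i_\d (j_\d i_\d)^{-1} j_\d A^{(l)}$ onto $\operatorname{im}(i_\d)$ along $\ker(j_\d)$, shows the sum is all of $S^l$. Hence we may write $A^{(l)} = A^{(l)}_l + i_\d B^{(l-2)}$ uniquely, where $A^{(l)}_l := A^{(l)} - i_\d(j_\d i_\d)^{-1} j_\d A^{(l)}$ is trace-free and $B^{(l-2)} := (j_\d i_\d)^{-1} j_\d A^{(l)} \in C^\infty(\overline{\O}; S^{l-2})$.

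Applying the induction hypothesis to $B^{(l-2)}$ yields $B^{(l-2)} = \sum_{k=0}^{[(l-2)/2]} i_\d^k A^{(l)}_{l-2-2k}$ with each $A^{(l)}_{l-2-2k}$ trace-free and smooth; substituting and reindexing (the $k$-th term becomes $i_\d^{k+1} A^{(l)}_{l-2(k+1)}$) gives the claimed formula, since $[(l-2)/2] + 1 = [l/2]$. For uniqueness, suppose $\sum_{k=0}^{[l/2]} i_\d^k C_{l-2k} = 0$ with all $C_{l-2k}$ trace-free; applying $j_\d$ and using $j_\d C_l = 0$ kills the $k=0$ term, and then the relation $j_\d i_\d^k = (j_\d i_\d) i_\d^{k-1} + \dots$ — more carefully, one shows by a short computation that applying $j_\d$ to $\sum_{k \geq 1} i_\d^k C_{l-2k}$ produces $i_\d$ applied to an invertible image of $\sum_{k \geq 0} i_\d^k C_{(l-2)-2k}$, forcing that inner sum to vanish; the induction hypothesis then gives $C_{l-2k} = 0$ for all $k$, and feeding this back gives $C_l = 0$ as well.

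The main obstacle is the algebraic lemma that $j_\d i_\d : S^{l-2} \to S^{l-2}$ is invertible (equivalently, the direct-sum splitting $S^l = \ker j_\d \oplus \operatorname{im}\, i_\d$): this requires a careful bookkeeping of how the symmetrization operator $\sigma$ interacts with insertion and contraction of Kronecker deltas, tracking the combinatorial constant that arises from the number of ways the two contracted indices can land among the symmetrized slots. This computation is entirely standard in the integral-geometry literature (see \cite{DS,PSU}), so in the write-up I would either quote it or carry out the two-line index computation showing $j_\d i_\d f = (\text{positive constant})\, f + i_\d j_\d(\text{something})$ and then induct once more on the trace to extract invertibility; everything else — smoothness, reindexing, uniqueness — is formal.
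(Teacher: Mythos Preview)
The paper does not supply its own proof of this lemma; it is stated as a recalled fact with a citation to \cite{DS,PSU}. Your proposal is therefore not being compared against a proof in the paper, but it does reproduce the standard argument found in those references, and it is correct in outline: the pointwise direct-sum splitting $S^l = \ker j_\d \oplus \operatorname{im} i_\d$, driven by invertibility of $j_\d i_\d$ on $S^{l-2}$, followed by induction on $l$, is exactly the route taken in \cite{DS,PSU}.

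One small clean-up for the uniqueness paragraph: rather than applying $j_\d$ and chasing the commutation $j_\d i_\d^k$, it is simpler to use the direct sum at level $l$ once more. If $\sum_{k} i_\d^k C_{l-2k} = 0$ with each $C_{l-2k}$ trace-free, then the $k=0$ term lies in $\ker j_\d$ and the remainder lies in $\operatorname{im} i_\d$; the direct sum forces $C_l = 0$ and $i_\d\bigl(\sum_{k\ge 0} i_\d^{k} C_{(l-2)-2k}\bigr)=0$. Injectivity of $i_\d$ (which you already have from invertibility of $j_\d i_\d$) kills the inner sum, and the induction hypothesis finishes. This avoids the slightly murky ``$j_\d i_\d^k = \dots$'' step in your write-up.
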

\noindent We will need the above lemma for $l=2,3$, where we denote the decomposition as
\begin{equation}\label{dec-en}
A^{(l)}(x) = \Tilde{A}^{(l)}(x) + i_\d a^{(l-2)}(x).
\end{equation}
\newline
Let us define
\[
\Vc = \{ \xi \in \Cb^n: \xi \cdot \xi = 0, \,\, \text{and} \,\, \mathrm{Im}(\xi_1) \geq 0\}.
\]
We now show that for a symmetric $m$-tensor field $F$, $\langle F, \xi^{\odot m} \rangle= 0$ for $\xi \in \Vc$ is sufficient to claim that $F$ is isotropic where $m =2, 3$.
In other words, the trace-free part of $F$ in the decomposition \eqref{dec-en} vanishes.
For a tensor $F$ of rank $m$, one obviously has 
\[
\langle F, \xi^{\odot m} \rangle= 0 \mbox{ for all } \xi \in \Cb^n \implies F = 0.
\]
However, in our proofs, we do not have access to all the vectors in $\Cb^n$. The following results show that the above implication also holds for trace-free tensors, using only the vectors in $\Vc$ for $m=1,2$ and $3$. Since $\Vc$ is not a linear space, we proceed by explicitly choosing vectors in $\Vc$ and show that each component of $F$ is $0$. Due to this, the calculation grows with $m$. We start with the simplest case of a vector and then move on to higher order symmetric tensors. Let us denote by $e_i$ the $i-$th coordinate vector in $\Rn$.
\begin{lemma}\label{1tensor}
    Let $H$ be a vector such that $\langle H, \xi \rangle = 0$ for all $\xi \in \Vc$. Then $H=0$.
\end{lemma}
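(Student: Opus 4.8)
The plan is to exhibit, for each pair of distinct indices, an explicit vector in $\Vc$ and to extract from the hypothesis a linear relation between two components of $H$. First I would fix the family of test vectors: for $a \neq b$ in $\{1,\dots,n\}$ put $\xi^{(a,b)} := e_a + \I e_b$. A direct check shows $\xi^{(a,b)}\cdot \xi^{(a,b)} = 1 + \I^2 = 0$, while the first coordinate of $\xi^{(a,b)}$ equals $\delta_{1a} + \I\,\delta_{1b}$, whose imaginary part is $\delta_{1b} \ge 0$; hence $\xi^{(a,b)} \in \Vc$ for every ordered pair $a \neq b$ (in particular $\xi^{(b,a)} \in \Vc$ as well).

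Next I would feed these into the hypothesis. Since $\langle H, \xi^{(a,b)}\rangle = H_a + \I H_b$, the assumption yields $H_a + \I H_b = 0$ for all $a \neq b$; interchanging $a$ and $b$ gives $H_b + \I H_a = 0$ too. Eliminating: substituting $H_a = -\I H_b$ from the first relation into the second gives $2 H_b = 0$, hence $H_b = 0$ and then $H_a = 0$. Since $n \ge 2$, every index $j$ can be paired with some distinct index, so $H_j = 0$ for all $j$, i.e. $H = 0$.

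I expect no genuine obstacle here; the one point worth flagging is the sign constraint $\mathrm{Im}(\xi_1) \ge 0$ built into $\Vc$, which is precisely why I work with the symmetric pair $\{\xi^{(a,b)}, \xi^{(b,a)}\}$ rather than with $\xi^{(a,b)}$ and $e_a - \I e_b$: the former choice satisfies the sign condition automatically, regardless of whether $a$ or $b$ equals $1$, whereas $e_a - \I e_b$ violates it when $b = 1$. This same device (choosing conjugate or sign-flipped partners that still land in $\Vc$) is what will have to be organized more carefully in the analogous statements for $2$- and $3$-tensors.
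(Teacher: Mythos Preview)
Your proof is correct and follows essentially the same approach as the paper's: both choose explicit pairs of test vectors in $\Vc$ to obtain two independent linear relations among the components of $H$ and then solve. The paper uses $\xi = \I e_1 \pm e_j$ for $j \geq 2$, while you use the symmetric pair $e_a + \I e_b$ and $e_b + \I e_a$; the underlying idea and the amount of work are the same.
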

\begin{proof}
    By considering $\xi = \I e_1 + e_j$ and $\xi = \I e_1 - e_j$ for $2 \leq j \leq n$, we find
    \begin{align*}
        0 &= \I H_1 + H_j,\\
        \text{and} \quad 0 &= \I H_1 - H_j.
    \end{align*}
    From this, we conclude that $H = 0$.
\end{proof}

\begin{remark}
    Let $H$ be a vector such that for each $1 \leq l \leq n$, $ \xi_l \langle H, \xi \rangle = 0$ for all $\xi \in \Vc$. For any $\xi \in \Vc$, there exists $1 \leq l \leq n$ such that $\xi_l \neq 0$ and hence $\langle H, \xi \rangle = 0$. The preceding lemma then implies that $H=0$.
\end{remark}

\begin{lemma}\label{2tensor}
    Let $G$ be a trace-free symmetric 2-tensor such that $\langle G, \xi^{\odot 2} \rangle = 0$ for all $\xi \in \Vc$. Then $G=0$. 
\end{lemma}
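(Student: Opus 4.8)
The plan is to verify the hypothesis against a short list of explicitly chosen vectors in $\Vc$, in exactly the spirit of Lemma~\ref{1tensor}, reading off the vanishing of each component of $G$ one family at a time; the trace-free hypothesis enters at a single point, to kill the diagonal. Throughout we use that $G$ is symmetric, so that $\langle G, \xi^{\odot 2}\rangle = \sum_{i,j=1}^n G_{ij}\xi_i\xi_j$, and that whenever a vector has the factor $\I$ in its first slot, the sign condition $\mathrm{Im}(\xi_1)\geq 0$ in the definition of $\Vc$ is automatic.

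Concretely I would proceed in three steps. First, for each $2\leq j\leq n$, the vectors $\xi = \I e_1 + e_j$ and $\xi = \I e_1 - e_j$ both lie in $\Vc$ (one checks $\xi\cdot\xi = \I^2 + 1 = 0$ and $\mathrm{Im}(\xi_1)=1\geq 0$), and substituting them into the identity yields $-G_{11} + G_{jj} + 2\I G_{1j} = 0$ and $-G_{11} + G_{jj} - 2\I G_{1j} = 0$; adding and subtracting gives $G_{jj} = G_{11}$ and $G_{1j} = 0$ for every $j\geq 2$. Second, trace-freeness gives $0 = \sum_{i=1}^n G_{ii} = n\,G_{11}$, hence $G_{11} = 0$, and therefore $G_{jj} = 0$ for all $j$. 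Third, for $2\leq j<k\leq n$, take $\xi = \I e_1 + \tfrac{1}{\sqrt 2}e_j + \tfrac{1}{\sqrt 2}e_k \in \Vc$ (again $\xi\cdot\xi = \I^2 + \tfrac12 + \tfrac12 = 0$); the identity, combined with the already-established relations $G_{11}=G_{jj}=G_{kk}=G_{1j}=G_{1k}=0$, collapses to $G_{jk}=0$. This last step is vacuous when $n=2$, so the argument covers all dimensions $n\geq 2$ uniformly, and we conclude $G\equiv 0$.

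The computation itself is entirely elementary; the only point that needs attention is checking that each test vector genuinely satisfies both defining conditions of $\Vc$, which is why the $\I$ is always placed in the first coordinate. One cannot shortcut the argument by invoking the fact that a quadratic form vanishing on the null cone $\{\xi\cdot\xi=0\}$ must be a scalar multiple of $\sum_i\xi_i^2$, precisely because $\Vc$ is not a linear subspace (and because $\sum_i\xi_i^2$ fails to be irreducible when $n=2$); hence the explicit choices above. The genuine difficulty only surfaces at the next order, in the $3$-tensor case, where both the number of components to be controlled and the care needed in selecting admissible vectors with a prescribed imaginary part in the first coordinate grow.
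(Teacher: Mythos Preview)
Your proof is correct and follows essentially the same approach as the paper: the same test vectors $\I e_1 \pm e_j$ to obtain $G_{1j}=0$ and $G_{jj}=G_{11}$, the trace-free condition to kill the diagonal, and then $\I e_1 + \tfrac{1}{\sqrt{2}}(e_j+e_k)$ for the remaining off-diagonal entries when $n\geq 3$. Your additional remarks verifying membership in $\Vc$ and explaining why one cannot simply quote the null-cone argument are welcome commentary but not needed for the comparison.
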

\begin{proof}
    As before, let $j > 1$ and considering $\xi = \I e_1 + e_j$ and then $\xi = \I e_1 - e_j$ gives
    \begin{align*}
        0 &= \lb G_{jj} - G_{11} \rb + 2 \I G_{1j} \\
        \text{and} \quad 0 &= \lb G_{jj} - G_{11} \rb - 2 \I G_{1j}.
    \end{align*}
    These imply that $G_{11} = G_{jj}$ and $G_{1j} = 0$ for each $j > 1$. Since $G$ is trace-free, $G_{jj} = G_{11} = 0$. Thus, we have established that $G_{1k} = 0 = G_{kk}$ for all $k \geq 1$. This finishes the proof for $n=2$.
    Let us assume $n \geq 3$. The only case left to consider is $G_{jk}$, for $1 < j < k \leq n$. Considering $\xi = \I e_1 + \frac{1}{\sqrt{2}} (e_j + e_k)$ and using previous conclusion finishes the proof.
\end{proof}

\begin{lemma}\label{3tensor}
    Let $F$ be a trace-free symmetric 3-tensor such that $\langle F, \xi^{\odot 3} \rangle = 0$ for all $\xi \in \Vc$. Then $F= 0$.
\end{lemma}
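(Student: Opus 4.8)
The plan is to mimic the proofs of Lemma~\ref{1tensor} and Lemma~\ref{2tensor}, but now exploiting the additional freedom we have in choosing $\xi \in \Vc$. The strategy is a bootstrap: first pin down all components $F_{ijk}$ whose index set is concentrated on $\{1, j\}$ for a single $j > 1$, then those spread over $\{1, j, k\}$, and finally those over $\{1,j,k,l\}$, at each stage feeding the trace-free condition $j_\d F = 0$ back in to eliminate the ``diagonal'' components that the test vectors alone cannot see.

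More concretely, I would proceed as follows. First, for fixed $j > 1$, plug in $\xi = \I e_1 + t e_j \in \Vc$ for a real parameter $t$ (this lies in $\Vc$ since $\xi \cdot \xi = -1 + t^2$ — so actually one must normalise and take $\xi = \I e_1 + e_j$, together with $\xi = \I e_1 - e_j$, exactly as in the lower-rank lemmas); expanding $\langle F, \xi^{\odot 3}\rangle = 0$ and separating the two equations yields relations among $F_{111}, F_{11j}, F_{1jj}, F_{jjj}$ — specifically something like $-\I F_{111} - 3\I F_{1jj}$ vanishing from the odd part and $-3F_{11j} \cdot(\pm 1) - F_{jjj}(\pm 1)$-type terms from the even part, giving $F_{111} = -3F_{1jj}$ (wait: signs must be tracked, but the upshot is two independent linear relations). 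Second, to get more relations I would use vectors with three nonzero coordinates, e.g. $\xi = \I e_1 + \tfrac{1}{\sqrt 2}(e_j + e_k)$ and sign variants $\pm$, and also $\xi = \I e_1 \pm \tfrac{1}{\sqrt2}(e_j - e_k)$, to isolate $F_{1jk}$, $F_{jjk}$, $F_{jkk}$ in terms of the already-controlled quantities. Third, for $n \geq 4$, vectors like $\xi = \I e_1 + \tfrac{1}{\sqrt3}(e_j + e_k + e_l)$ reach $F_{jkl}$ with distinct $j,k,l > 1$. Throughout, the trace-free identities $\sum_{m} F_{mmp} = 0$ for each $p$ (i.e. $F_{11p} + \sum_{q>1} F_{qqp} = 0$) are the crucial extra input that converts the homogeneous linear relations coming from the test vectors into the conclusion $F \equiv 0$; without trace-freeness the totally diagonal-type tensor survives, as the $A^{(4)}$ example in the introduction already illustrates.

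The main obstacle I anticipate is purely bookkeeping: unlike the rank-$1$ and rank-$2$ cases, where two test vectors per index $j$ suffice, for a $3$-tensor the number of distinct component types grows (types indexed by how an unordered multiset of size $3$ distributes over $\{1\} \cup \{j,k,l,\dots\}$), so one needs a carefully chosen finite family of vectors in $\Vc$ and must verify that the resulting linear system, once the trace conditions are adjoined, has only the trivial solution. A secondary subtlety is that $\Vc$ requires $\mathrm{Im}(\xi_1) \geq 0$, so one cannot freely negate $\xi$; the fix, as in the earlier lemmas, is to negate only the real part (replace $e_j$ by $-e_j$), which keeps $\xi \cdot \xi = 0$ and the sign constraint intact while still producing the needed pair of equations. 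I expect the proof to be organised as a short induction on $n$ (base case $n = 2$, where the only components are $F_{111}, F_{112}, F_{122}, F_{222}$ and two test vectors plus two trace relations close it), with the inductive step handling the genuinely new components that involve a coordinate index not previously used.
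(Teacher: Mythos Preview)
Your proposal is correct and follows essentially the same strategy as the paper: the paper's proof also proceeds by testing against $\xi = \I e_1 \pm e_j$, then $\xi = \I e_1 + \tfrac{1}{\sqrt 2}(\pm e_m \pm e_l)$, and finally $\xi = \I e_1 + \tfrac{1}{\sqrt 3}(e_i + e_j + e_k)$, feeding in the trace-free relations $\sum_l F_{mll} = 0$ at each stage to kill the diagonal pieces, and organizing the argument by the cases $n=2$, $n\geq 3$, $n\geq 4$ rather than a formal induction. The only substantive thing you should be careful about when writing it up is the sign bookkeeping you flagged: the actual relations from $\xi = \I e_1 \pm e_j$ are $F_{111} = 3F_{1jj}$ and $F_{jjj} = 3F_{11j}$ (not $F_{111} = -3F_{1jj}$), and it is precisely these signs combined with $\sum_l F_{1ll} = 0$ that force $F_{111} = 0$.
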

\begin{proof}
    \textbf{Step 1:} For $j \neq 1$, Considering $\xi = \I e_1 + e_j \in \Vc$ and $\I e_1 - e_j \in \Vc$, we find
    \begin{align*}
        0 &= \I \lb 3 F_{jj1} - F_{111} \rb + \lb F_{jjj} - 3F_{j11} \rb \\
        \text{and} \quad 0 &= \I \lb 3 F_{jj1} - F_{111} \rb - \lb F_{jjj} - 3F_{j11} \rb.
    \end{align*}
    From this, we conclude that 
    \[
    F_{111} = 3 F_{jj1} \quad \text{and} \quad F_{jjj} = 3 F_{j11}, \quad \text{for each} \quad 2 \leq j \leq n.
    \]
    Since $F$ is trace free, we have
    \begin{align*}
        0 &= \sum\limits_{l=1}^n F_{1ll} 
        = F_{111} + \sum\limits_{l=2}^n F_{1ll}
        = F_{111} + \frac{n-1}{3} F_{111}
        = \frac{n+2}{3} F_{111},
    \end{align*}
    and thus $F_{111} = 0$, which in turn implies $F_{1jj} = 0$, for each $2 \leq j \leq n$. We have established that $F_{1jj} = 0$ for each $1 \leq j \leq n$. 

    If $n=2$, we also have $0 = F_{211} + F_{222} = 4 F_{211}$, and hence $F_{211} = 0 = F_{222}$. Thus, for $n=2$, we have established that $F$ must be $0$.

    \textbf{Step 2:} We now assume that $n \geq 3$. Also in step 1, we have established that $F_{1jj} = 0$ for each $1 \leq j \leq n$. For $2 \leq m < l \leq n$ (such $m,l$ exist since $n \geq 3$), for $a,b \in \R$ chosen suitably (later) such that $\xi = \I e_1 + a e_l + b e_m \in \Vc$, we have (using the result in step 1):
    \begin{align*}
        0 &= \langle F, \xi^{\odot 3} \rangle \\
        &= -3 a F_{l11} - 3b F_{m11} + 6 \I ab F_{ml1} + b^3 F_{mmm} +a^3 F_{lll} + 3 a^2 b F_{mll} + 3 ab^2 F_{mml}. 
    \end{align*}
    Choosing $\lb a,b \rb = \lb \frac{1}{\sqrt{2}}, \frac{1}{\sqrt{2}} \rb$ and then $(a,b) = \lb \frac{-1}{\sqrt{2}}, \frac{-1}{\sqrt{2}} \rb$, and comparing, we have
    \[
    F_{ml1} = 0 \quad \text{for all} \quad 2 \leq m < l \leq n,
    \]
    since this is the only term that doesn't change sign. This then implies that
    \[
    0 = -3 a F_{l11} - 3b F_{m11}  + b^3 F_{mmm} +a^3 F_{lll} + 3 a^2 b F_{mll} + 3 ab^2 F_{mml}.
    \]
    Choosing $(a,b) = \lb \frac{1}{\sqrt{2}}, \frac{1}{\sqrt{2}} \rb$ and then $(a,b) = \lb \frac{1}{\sqrt{2}}, \frac{-1}{\sqrt{2}} \rb$ and comparing, we have
    \[
    - 6 F_{m11} + F_{mmm} + 3 F_{mll} = 0, \quad \text{for all} \quad 2 \leq m< l \leq n.
    \]
    Using $F_{mmm} = 3 F_{11m}$ from step 1, the above equality gives 
    \[
    F_{mll} = F_{m11} = \frac{1}{3} F_{mmm}.
    \]
    Since $F$ is trace free, we have 
    \begin{align*}
        0 &= F_{mmm} + F_{m11} + \sum\limits_{l \neq 1,m} F_{mll} \\
        &= 4 F_{m11} + (n-2) F_{m11},
    \end{align*}
    and thus we have $F_{m11} = F_{mmm} = F_{mll} = 0$. To summarize, we have shown that $F_{ijk} = 0$ when at least two indices are same, or when at least one index is $1$. This finishes the proof if $n=3$.

    \textbf{Step 3:} The only case left to consider is  $F_{ijk}$, when $ n \geq 4$ and when all three indices are distinct and larger than $1$. Considering $\xi = \I e_1 + \frac{1}{\sqrt{3}} (e_i + e_j + e_k)$ for $1 < i < j < k \leq n$ and using previous results finishes the proof.
\end{proof}

\subsection{Some existing density results using biharmonic functions}
Let us recall the following lemma which proves a Runge type density result for biharmonic functions. This lemma serves a crucial purpose in the construction of solutions with desired boundary conditions in the local-to-global proof of Theorem~\ref{MR2} (see Lemma~\ref{Lem_UCP} and its proof). 
\begin{lemma}\cite[Lemma~3.1]{agrawal2023linearized}\label{Runge}
    Let $\O_1 \subset \O_2$ be two bounded open sets with smooth boundaries. Let $G_2$ be the Green kernel for the biharmonic operator associated to the open set $\O_2$, i.e.
    \begin{equation}
    \begin{aligned}  
    (-\Delta_y)^2 G_2(x,y) &= \d(x-y), \quad &&\text{for all~} x,y \in \O_2, \\
    \lb G_2(x, \cdot), \frac{\PD G_2}{\PD \nu}(x, \cdot) \rb &= 0, \quad &&\text{on~} \PD \O_2.
    \end{aligned}
    \end{equation}
    Define the set
    \[
    \Fc \coloneqq \left\{ \int\limits_{\O_2} G_2(\cdot, y) a(y) \, \D y~:~ a \in C^\infty(\overline{\O}_2), \quad \mathrm{supp}(a) \subset \O_2 \setminus \overline{\O}_1 \right\}.
    \]
    Then the set $\Fc$ is dense in the subspace of all biharmonic functions $u \in C^\infty(\overline{\O}_1)$ such that $u\lvert_{\PD \O_1 \cap \PD \O_2} = 0 = \frac{\PD u}{\PD \nu}\lvert_{\PD \O_1 \cap \PD \O_2}$, equipped with $L^2(\O_1)$ topology.
\end{lemma}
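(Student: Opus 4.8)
The plan is to argue by duality, via the Hahn--Banach theorem. First I would check that $\Fc$ is contained in the target subspace $X\subset L^2(\O_1)$: if $a\in C^\infty(\overline{\O}_2)$ has $\supp a\subset\O_2\setminus\overline{\O}_1$, then $w(x):=\int_{\O_2}G_2(x,y)a(y)\,\D y$ solves $(-\Delta)^2 w=a$ in $\O_2$ with $(w,\PD_\nu w)|_{\PD\O_2}=0$, so $w$ is biharmonic in $\O_1$, smooth up to $\overline{\O}_1$ (it is biharmonic on the neighbourhood $\overline{\O}_2\setminus\supp a$ of $\overline{\O}_1$, with smooth boundary data on the nearby part of $\PD\O_2$), and $w|_{\PD\O_1\cap\PD\O_2}=\PD_\nu w|_{\PD\O_1\cap\PD\O_2}=0$. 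Hence it suffices to show: any $f\in L^2(\O_1)$ with $\int_{\O_1}f\,w\,\D x=0$ for all $w\in\Fc$ also satisfies $\int_{\O_1}f\,u\,\D x=0$ for every biharmonic $u\in C^\infty(\overline{\O}_1)$ with $(u,\PD_\nu u)|_{\PD\O_1\cap\PD\O_2}=0$.

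Fix such an $f$ and let $\tilde f\in L^2(\O_2)$ denote its extension by zero. Put $\phi(y):=\int_{\O_2}G_2(x,y)\tilde f(x)\,\D x$. Using the defining properties of $G_2$, $\phi$ solves $(-\Delta)^2\phi=\tilde f$ in $\O_2$ with $(\phi,\PD_\nu\phi)|_{\PD\O_2}=0$; elliptic regularity for the clamped plate problem gives $\phi\in H^4(\O_2)$, and $\phi$ is biharmonic, hence smooth, on $\O_2\setminus\overline{\O}_1$, where $\tilde f$ vanishes. By Fubini, the hypothesis on $f$ rewrites as $\int_{\O_2}a(y)\phi(y)\,\D y=0$ for every $a\in C^\infty(\overline{\O}_2)$ with $\supp a\subset\O_2\setminus\overline{\O}_1$; since such $a$ exhaust $C_c^\infty(\O_2\setminus\overline{\O}_1)$ and $\phi$ is continuous there, we conclude $\phi\equiv 0$ on the open set $\O_2\setminus\overline{\O}_1$.

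Next I would integrate by parts on $\O_1$. Decompose $\PD\O_1=\Sigma_i\sqcup\Sigma_b$ with $\Sigma_b:=\PD\O_1\cap\PD\O_2$ and $\Sigma_i:=\PD\O_1\cap\O_2$ (a genuine partition, since $\PD\O_1\subset\overline{\O}_2$). The hypersurface $\Sigma_i$ lies in $\PD(\O_2\setminus\overline{\O}_1)$, and $\phi\in H^4(\O_2)$ vanishes identically on the exterior side $\O_2\setminus\overline{\O}_1$ of $\Sigma_i$; matching of one-sided traces forces the whole $3$-jet of $\phi$ to vanish on $\Sigma_i$, in particular $\phi=\PD_\nu\phi=\Delta\phi=\PD_\nu(\Delta\phi)=0$ on $\Sigma_i$. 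On $\Sigma_b$ we have $\phi=\PD_\nu\phi=0$ from the boundary conditions on $\PD\O_2$, and $u=\PD_\nu u=0$ by hypothesis. Applying the biharmonic Green identity on $\O_1$,
\[
\int_{\O_1}\big((-\Delta)^2\phi\cdot u-\phi\cdot(-\Delta)^2 u\big)\,\D x=\int_{\PD\O_1}\big(\PD_\nu(\Delta\phi)\,u-\Delta\phi\,\PD_\nu u-\phi\,\PD_\nu(\Delta u)+\PD_\nu\phi\,\Delta u\big)\,\D S,
\]
and using $(-\Delta)^2 u=0$ and $(-\Delta)^2\phi=\tilde f=f$ in $\O_1$, the left-hand side equals $\int_{\O_1}f\,u\,\D x$. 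On the right-hand side each term of the integrand carries a factor belonging to $\{u,\PD_\nu u,\phi,\PD_\nu\phi\}$, all of which vanish on $\Sigma_b$, while on $\Sigma_i$ the four quantities $\phi,\PD_\nu\phi,\Delta\phi,\PD_\nu(\Delta\phi)$ vanish; hence the boundary integral is $0$. Therefore $\int_{\O_1}f\,u\,\D x=0$, which is what we needed.

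I expect the delicate point to be the trace-matching step on the interface $\Sigma_i$: one must be sure that $\phi$ together with its derivatives up to order three have vanishing trace there, which relies on $\phi$ being globally of class $H^4$ across $\Sigma_i$ (so that the one-sided traces agree) and on its identical vanishing on the exterior side. The remaining ingredients — the Hahn--Banach reduction, the well-posedness and regularity of the clamped plate problem, and the Green identity — are routine.
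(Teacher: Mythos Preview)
The paper does not supply a proof of this lemma; it is quoted verbatim from \cite[Lemma~3.1]{agrawal2023linearized} and used as a black box. Your argument is correct and is precisely the standard Hahn--Banach duality proof for Runge-type approximation results (and is the method of the cited reference): represent the annihilator $f$ by solving the clamped plate problem $(-\Delta)^2\phi=\tilde f$ on $\O_2$, use the orthogonality to conclude $\phi\equiv 0$ on $\O_2\setminus\overline{\O}_1$, and then apply the biharmonic Green identity on $\O_1$. Your identification of the only subtle point---the matching of one-sided traces of $\phi$ up to order three across the interior interface $\Sigma_i$, justified by the global $H^4(\O_2)$ regularity of $\phi$---is exactly right.
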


Let us also recall the main result of \cite{agrawal2023linearized}. The following result concerning the density of biharmonic functions allows us to infer pointwise information from the integral identity.  Here, $\Ec_\Gamma$ is as defined in \eqref{eq_Ec}.
\begin{theorem}\label{AJS}
    Let $a^2, a^0 \in C^\infty(\overline{\O})$ and $a^1 \in C^\infty(\overline{\O}; \Rn)$. Suppose 
    \[
    \int\limits_{\O} \lb a^2 \Delta u + a^1 \cdot \nabla u + a^0 u \rb v \D x = 0
    \]
    holds for all $u,v \in \Ec_{\Gamma}$, then $a^j = 0$ in $\O$, for $j=0,1,2$.
\end{theorem}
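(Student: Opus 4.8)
\emph{Overall structure.} I would prove Theorem~\ref{AJS} following a local-to-global scheme analogous to the one used for Theorem~\ref{MR2}: first show that $a^2,a^1,a^0$ vanish locally by inserting explicit exponential biharmonic functions into the integral identity, separating the coefficients according to the order of the operator they multiply, and then propagate the vanishing to all of $\O$ via the Runge approximation of Lemma~\ref{Runge}. The building blocks are the biharmonic functions $e^{\rho\cdot x}$ and $(c\cdot x)\,e^{\rho\cdot x}$ for $c\in\Cb^n$ and $\rho\in\Cb^n$ with $\rho\cdot\rho=0$; note that $e^{\rho\cdot x}$ is actually harmonic, so it is $\Delta\big((c\cdot x)e^{\rho\cdot x}\big)=2(c\cdot\rho)\,e^{\rho\cdot x}$ that carries information about $a^2$.

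\emph{Extracting the coefficients (the full-data computation).} With $u=e^{\rho_1\cdot x}$, $v=e^{\rho_2\cdot x}$, $\rho_j\cdot\rho_j=0$, $\rho_1+\rho_2=\I\xi$, the $a^2$-term drops and the identity reduces to $\int_\O(a^1\cdot\rho_1+a^0)\,e^{\I\xi\cdot x}\,\D x=0$. Writing $\rho_1=\tfrac12\I\xi+\mu$ with $\mu\cdot\mu=\tfrac14|\xi|^2$, $\mu\cdot\xi=0$ and $|\mu|\to\infty$ (possible for $n\geq3$), so that $\rho_1/|\mu|$ tends to a complex null vector $\zeta\perp\xi$, one divides by $|\mu|$ and lets $|\mu|\to\infty$ to obtain $\widehat{a^1}(\xi)\cdot\zeta=0$; combining this with the first-moment data provided by the family $u=(c\cdot x)e^{\rho_1\cdot x}$ and a linear-algebra argument in the spirit of Lemma~\ref{1tensor} forces $a^1\equiv0$, and then the leftover term gives $a^0\equiv0$. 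Once $a^1=a^0=0$, the family $u=(c\cdot x)e^{\rho_1\cdot x}$, $v=e^{\rho_2\cdot x}$ leaves $\int_\O 2(c\cdot\rho_1)\,a^2\,e^{\I\xi\cdot x}\,\D x=0$, and choosing $c=\overline{\rho_1}$ gives $\widehat{a^2}(\xi)=0$, hence $a^2\equiv0$. (Alternatively, once $a^1=a^0=0$, one shows that $\Delta u$ for $u\in\Ec_\Gamma$ ranges over a dense set of harmonic functions, so that $\int_\O a^2(\Delta u)v\,\D x=0$ collapses to the density of products of harmonic functions vanishing on $\Gamma$ of \cite{DSFKSU}.) For $n=2$, where the null vectors orthogonal to $\xi$ degenerate, one replaces these exponential solutions by Bukhgeim-type solutions $e^{\Phi/h}$ with $\Phi$ holomorphic having a nondegenerate critical point, and reads off the coefficients from a stationary-phase expansion.

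\emph{The main difficulty.} The computation above ignores the constraint defining $\Ec_\Gamma$: none of the exponential solutions satisfy $(u,\PD_\nu u)\lvert_\Gamma=0$. I would handle this by a local-to-global argument: enlarge $\O$ to a smooth domain $\O_2\supset\O$ with $\PD\O_2\cap\PD\O=\Gamma$ (possible, after slightly enlarging $\Gamma$, since $\Gamma^c$ is open), use Lemma~\ref{Runge} with $\O_1=\O$ to approximate in $L^2(\O)$ every element of $\Ec_\Gamma$ by Green potentials $\int_{\O_2}G_2(\cdot,y)a(y)\,\D y$ with $\supp a\subset\O_2\setminus\overline\O$, and combine these with exponential solutions concentrating at an interior point $x_0$ --- whose Cauchy data on $\Gamma$ are then exponentially small and can be cancelled by a biharmonic correction of the same size, keeping the result in $\Ec_\Gamma$ --- to run the extraction of $a^2,a^1,a^0$ near $x_0$. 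I expect the principal obstacle to be exactly this construction of biharmonic CGO-type solutions that both concentrate at $x_0$ and satisfy the homogeneous Cauchy condition on $\Gamma$, where the interplay among the biharmonic structure, the partial-data constraint, and Lemma~\ref{Runge} is essential; once that is in place, the globalization and the coefficient extraction above are routine bookkeeping.
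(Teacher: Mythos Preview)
The paper does not contain a proof of Theorem~\ref{AJS}: it is stated in Section~\ref{prelim} as the main result of \cite{agrawal2023linearized} and used as a black box, so there is no in-paper argument to compare your proposal against.

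On the merits of your outline, there is a genuine mismatch between its two halves. Your ``full-data computation'' requires both $u=e^{\rho_1\cdot x}$ and $v=e^{\rho_2\cdot x}$ with the exact algebraic relation $\rho_1+\rho_2=\I\xi$ and an independent large parameter $|\mu|\to\infty$, so that the integral collapses to a Fourier transform at the real frequency $\xi$. None of these exponentials lie in $\Ec_\Gamma$, and neither of your proposed remedies preserves that structure. Lemma~\ref{Runge} yields only $L^2(\O)$ approximation, which is too weak to pass to the limit in the term $\int_\O a^2\,(\Delta u)\,v\,\D x$; and the correction procedure of Lemma~\ref{construction} produces solutions indexed by a \emph{single} null vector $\xi\in\Vc$ together with a small parameter $h$, chosen precisely so that $e^{-\I x\cdot\xi/h}$ is already exponentially small on $\Gamma\subset\{x_1<-c\}$ --- it does not give you a pair $(\rho_1,\rho_2)$ with prescribed real sum and independently large modulus, and for generic such pairs the exponentials are large on $\Gamma$, so the biharmonic correction is large as well and swamps the leading term. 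Your phrase ``exponential solutions concentrating at an interior point $x_0$'' also does not match what these solutions do: they are global plane waves, not wave packets. As a result, the Fourier-transform extraction you describe cannot be run with functions in $\Ec_\Gamma$.

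The route actually taken in \cite{agrawal2023linearized}, as one can infer from the tools imported here (Lemma~\ref{construction} and Lemma~\ref{Runge} are both quoted from that paper), is different: after the conformal normalization of Remark~\ref{difficult_remark}, one inserts \emph{two} solutions from Lemma~\ref{construction} into the integral identity and analyzes the resulting $h$-dependent transform locally near a boundary point of $\Sigma$, in the spirit of the Segal--Bargmann/FBI analysis of \cite{DSFKSU}, rather than a global Fourier transform; the globalization is then the unique-continuation scheme of Lemma~\ref{Lem_UCP}. Your local-to-global skeleton is right, but the local extraction step needs to be rebuilt around the single-parameter family of Lemma~\ref{construction} rather than the Sylvester--Uhlmann pairing.
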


\section{Proofs of the density theorems}\label{proofs}
In this section we prove the density results presented in Theorem \ref{MR1} and Theorem \ref{MR2}. Firstly, in Subsection \ref{Sec_reduction} we prove Theorem \ref{MR1} assuming Theorem \ref{MR2} (see Corollary \ref{Cor_Th_1.1}). Next, in Subsection \ref{Sec_local} we prove Theorem \ref{MR2} locally near the boundary region where the biharmonic functions do not vanish. 
Finally, in Subsection \ref{Sec_UCP}, we prove that under the assumptions of Theorem~\ref{MR2}, we can uniquely continue the tensor fields to vanish over the whole domain $\Omega$.

\subsection{Reduction of Theorem~\ref{MR1} to Theorem~\ref{MR2}.}\label{Sec_reduction}
In this subsection, we show that the density result of Theorem~\ref{MR1} can be reduced to that of Theorem~\ref{MR2}. The main result of this subsection is the following lemma, which gets rid of the intertwining of the derivatives of various biharmonic functions.

Let us define an auxiliary operator $\Ac_m^{\sharp}$ as follows
\[
\Ac_m^{\sharp}(v)(x) \coloneqq \sum\limits_{l=1}^3 \sum\limits_{i_1, \dots, i_l = 1}^{n} A^{(l)}_{i_1 \dots i_l}(x,v) D^{(l)}_{i_1 \dots i_l} v (x) + \frac{1}{m} A^{(0)} (x,v) v(x). 
\]
We have the following lemma regarding the operator $\Ac_m^\sharp$.
\begin{lemma}\label{equiv}
    For $m \geq 2$, the following identities are equivalent.
    \begin{equation}\label{id1}
    \int\limits_{\O} \Ac_{m} (v_0, \dots, v_m) \, \D x = 0, \quad \text{for all } v_i \in \Ec_{\Gamma}, i = 0, 1, \dots , m
    \end{equation}
    \begin{equation}\label{id2}
    \int\limits_{\O} \Ac_m^\sharp (v_0)~v_1 \dots v_m \, \D x = 0,\quad \text{for all } v_i \in \Ec_{\Gamma}, i = 0, 1, \dots , m.
    \end{equation}
    In either case, we have that $\Ac_m^\sharp(v) \equiv 0$ in $\overline{\O}$ for all $v \in \Ec_\Gamma$.
\end{lemma}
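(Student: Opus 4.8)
The statement asserts the equivalence of two integral identities \eqref{id1} and \eqref{id2}, and then the pointwise vanishing $\Ac_m^\sharp(v)\equiv 0$ in either case. I would organize the proof as follows. First, establish the equivalence \eqref{id1} $\Leftrightarrow$ \eqref{id2} by a direct algebraic manipulation of the integrand: in the definition of $\Ac_m(v_0,\dots,v_m)$, group together the $l=1,2,3$ terms and the $l=0$ term separately. The key observation is that the product $v_1\cdots v_m$ appearing in \eqref{id2} can be split, since for $l\geq 1$ the sum $\sum_{k=1}^m D^{(l)}_{i_1\dots i_l}v_k\prod_{j\neq k}v_j$ is \emph{symmetric} under permutations of $(v_1,\dots,v_m)$, while the extra factor of $\tfrac1m$ in front of $A^{(0)}$ in $\Ac_m^\sharp$ accounts for the fact that in $\Ac_m$ the function term $A^{(0)}v_0$ gets multiplied by the full product $v_1\cdots v_m$ (equivalently, appears $m$ times if one symmetrizes over which of the $v_k$ plays the "derivative" role). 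So I would first show that $\Ac_m(v_0,\dots,v_m)$ is a symmetric function of $(v_1,\dots,v_m)$ and that, after symmetrizing the right-hand side of \eqref{id2} over $(v_1,\dots,v_m)$ and using multilinearity, the two integrands differ by terms whose integrals vanish by hypothesis. The cleanest route: show that $\Ac_m(v_0,\dots,v_m)$ equals the symmetrization over $(v_1,\dots,v_m)$ of $\Ac_m^\sharp(v_0)v_1\cdots v_m$ (up to a combinatorial constant absorbed into the $\tfrac1m$), so that \eqref{id1} holds iff \eqref{id2} holds — both being the statement that the symmetric multilinear form vanishes.

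**Deducing pointwise vanishing.** For the last assertion, I would argue as follows. Fix $v\in\Ec_\Gamma$ and set $w:=\Ac_m^\sharp(v)\in C^\infty(\overline\O)$ (note $w$ is a fixed smooth function once $v$ is fixed). Identity \eqref{id2} with $v_0=v$ reads $\int_\O w\, v_1\cdots v_m\,\D x=0$ for all $v_1,\dots,v_m\in\Ec_\Gamma$. Now I invoke the known density of products of biharmonic functions vanishing on $\Gamma$: by \cite{DSFKSU} the products $v_1 v_2$ with $v_1,v_2\in\Ec_\Gamma$ (or even harmonic functions vanishing on $\Gamma$, which lie in $\Ec_\Gamma$) are dense in $L^2(\O)$ — and since $m\geq 2$, choosing $v_3=\cdots=v_m$ to be a fixed nonvanishing biharmonic function is too crude; instead one takes $v_3,\dots,v_m$ equal to a suitable fixed element and uses that $v_1 v_2$ already spans a dense set, or more simply observes $\{v_1\cdots v_m : v_i\in\Ec_\Gamma\}$ contains $\{v_1 v_2 : v_i\in\Ec_\Gamma\}$ after multiplying by $v_3\equiv\cdots$ — the cleanest is to note that products of two harmonic functions vanishing on $\Gamma$ are already dense in $L^2(\O)$, and such harmonic functions belong to $\Ec_\Gamma$, so taking $v_3,\dots,v_m$ to be one more such harmonic function (nonzero a.e.) and absorbing it gives density of the $m$-fold products in $L^1_{\mathrm{loc}}$ against the fixed smooth $w$. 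This forces $w\equiv 0$ in $\O$, hence in $\overline\O$ by continuity.

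**The main obstacle.** The routine part is the algebraic equivalence; the slightly delicate point is the last step, where one must be careful that $m\geq 2$ and that one genuinely has enough freedom in choosing $v_1,\dots,v_m$ to conclude pointwise vanishing of $w$ from $\int_\O w\,v_1\cdots v_m=0$. The resolution uses the density result of \cite{DSFKSU}: products $uv$ of two functions in $\Ec_\Gamma$ (indeed, of two harmonic functions vanishing on $\Gamma$) are dense in $L^2(\O)$. Then writing the $m$-fold product as $(v_1 v_2)\cdot(v_3\cdots v_m)$ and fixing $v_3,\dots,v_m$ to be, say, all equal to a single harmonic function $h\in\Ec_\Gamma$ that is strictly positive on a given relatively compact subdomain (such $h$ exists locally, or one argues on each such subdomain separately), one gets $\int_\O (w\, h^{m-2})\, v_1 v_2 \,\D x=0$ for all $v_1,v_2\in\Ec_\Gamma$, whence $w\, h^{m-2}\equiv 0$ on that subdomain, hence $w\equiv 0$ there, and finally $w\equiv 0$ in all of $\O$. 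I would flag this localization/positivity argument as the one point requiring genuine care; everything else is bookkeeping.
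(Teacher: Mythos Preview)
Your proposal has a genuine gap in the pointwise-vanishing step. You plan to invoke \cite{DSFKSU} by taking \emph{harmonic} functions vanishing on $\Gamma$ as elements of $\Ec_\Gamma$, and later to fix $v_3,\dots,v_m$ equal to a harmonic $h\in\Ec_\Gamma$ positive on a subdomain. But recall the definition $\Ec_\Gamma=\{u\in C^\infty(\overline\O):(-\Delta)^2u=0,\ (u,\partial_\nu u)\lvert_\Gamma=0\}$: membership requires \emph{both} $u$ and $\partial_\nu u$ to vanish on $\Gamma$. A harmonic function whose full Cauchy data vanish on a nonempty open piece of $\partial\O$ is identically zero by unique continuation. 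Hence the only harmonic element of $\Ec_\Gamma$ is $0$, and neither the density input from \cite{DSFKSU} nor your positive harmonic $h$ is available here. The paper instead uses the biharmonic density result (Theorem~\ref{AJS}): the vanishing of $\int_\O\big(\Ac_m^\sharp(v_0)\,v_1\cdots v_{m-2}\big)v_{m-1}v_m\,\D x$ for all $v_{m-1},v_m\in\Ec_\Gamma$ forces $\Ac_m^\sharp(v_0)\,v_1\cdots v_{m-2}\equiv 0$ pointwise, and then one divides by $v_1\cdots v_{m-2}$ using that the zero set of a nontrivial biharmonic function has measure zero.

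A secondary issue is the algebraic equivalence. Your plan is to ``symmetrize $\Ac_m^\sharp(v_0)\,v_1\cdots v_m$ over $(v_1,\dots,v_m)$'', but that expression is already symmetric in $v_1,\dots,v_m$, so symmetrizing over those indices gives nothing. The integrands in \eqref{id1} and \eqref{id2} are genuinely different multilinear forms (derivatives fall on $v_1,\dots,v_m$ in $\Ac_m$ but on $v_0$ in $\Ac_m^\sharp$), so one cannot pass between them by a symmetry of $(v_1,\dots,v_m)$ alone. The paper's device is to swap the distinguished slot $v_0$ with each $v_k$: one has the pointwise identity $\Ac_m(v_0,\dots,v_m)=\sum_{k=1}^m \Ac_m^\sharp(v_k)\,v_0\prod_{j\neq k}v_j$, which (together with the pointwise vanishing above) gives \eqref{id2}$\Rightarrow$\eqref{id1}; and an explicit formula expressing $\Ac_m^\sharp(v_0)\,v_1\cdots v_m$ as a linear combination of $\Ac_m$ evaluated at $(v_0,\dots,v_m)$ and at the tuples with $v_0\leftrightarrow v_k$, which gives \eqref{id1}$\Rightarrow$\eqref{id2}. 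Your ``both are the same symmetric multilinear form'' heuristic does not capture this $v_0\leftrightarrow v_k$ exchange.
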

\begin{proof}
    Let us assume~\eqref{id1}. It is easily verified that
\[	\begin{aligned}
    \Ac_m^\sharp (v_0) v_1 \dots v_m =& \frac{1}{m} \left[ \Ac_{m}(v_0, \dots, v_m) \right. \\
    &\quad \left.- \left \{ \sum\limits_{k=1}^m \lb \Ac_{m}(v_0, \dots, v_m) - \Ac_{m}(v_k, v_1, \dots, v_0, \dots, v_m) \rb \right\} \right].
\end{aligned}	\]
Hence we arrive at \eqref{id2}.
Conversely, let us assume \eqref{id2}.
Since $m \geq 2$, the density of biharmonic functions with Dirichlet data vanishing on an arbitrary closed set (Theorem~\ref{AJS}) implies
    \[
    \Ac_m^\sharp (v_0) v_1 \dots v_{m-2} \equiv 0, \quad \text{for all } v_i \in \Ec_{\Gamma}, i = 0, \dots, m-2.
    \]
    Since the set of zeros of biharmonic functions (and hence their finite products) has measure zero, we deduce
    \[
    \Ac_m^\sharp (v) = 0 \text{ a.e.}, \quad \text{for all } v \in \Ec_{\Gamma}.
    \]
    Owing to the regularity of $v \in \Ec_{\Gamma}$, the last claim follows.
    By the equality
    \[
        \Ac_{m}(v_0, \dots, v_m) = \sum\limits_{k=1}^m \Ac_m^\sharp(v_k) \lb \prod\limits_{\substack{j=1\\ j \neq k}}^{m} v_j \rb v_0,
    \]
    we obtain \eqref{id1}.
\end{proof}
\begin{corollary}\label{Cor_Th_1.1}
Notice that $\Ac_m^\sharp$ is a special case of the operator $\Ac^{\sharp}$. Thus, Theorem~\ref{MR1} follows from Theorem~\ref{MR2}.
\end{corollary}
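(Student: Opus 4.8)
The plan is to obtain Theorem~\ref{MR1} from Theorem~\ref{MR2} by routing the hypothesis through the auxiliary operator $\Ac_m^\sharp$, with Lemma~\ref{equiv} serving as the bridge. First I would assume the hypothesis of Theorem~\ref{MR1}: $\int_\O \Ac_m(v_0,\dots,v_m)\,\D x = 0$ for every choice of $v_i \in \Ec_\Gamma$, $i=0,\dots,m$. (Throughout, the tensor coefficients $A^{(l)}$ are functions of $x$ only, so the operators $\Ac_m^\sharp$ and $\Ac^\sharp$ below are read with $x$-dependent coefficients.) This hypothesis is exactly identity~\eqref{id1}, so Lemma~\ref{equiv} gives the equivalent identity~\eqref{id2},
\[
\int_\O \Ac_m^\sharp(v_0)\, v_1 \cdots v_m\,\D x = 0, \qquad \text{for all } v_i \in \Ec_\Gamma .
\]

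Next I would note that $\Ac_m^\sharp$ is literally of the form $\Ac^\sharp(x,D)$. By definition
\[
\Ac_m^\sharp(v)(x) = \sum_{l=1}^3 \sum_{i_1,\dots,i_l=1}^n A^{(l)}_{i_1\dots i_l}(x)\, D^{(l)}_{i_1\dots i_l} v(x) + \frac{1}{m}\, A^{(0)}(x)\, v(x),
\]
so if I relabel $\widetilde A^{(0)} \coloneqq \frac{1}{m} A^{(0)}$ and $\widetilde A^{(l)} \coloneqq A^{(l)}$ for $l=1,2,3$ — all still symmetric $l$-tensor fields with $C^\infty(\overline{\O})$ components — then $\Ac_m^\sharp(v) = \Ac^\sharp(x,D) v$ for the coefficients $\widetilde A^{(l)}$. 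Hence the displayed identity is precisely the hypothesis of Theorem~\ref{MR2} (with the same $m \geq 2$) for the operator built from $\widetilde A^{(l)}$. Applying Theorem~\ref{MR2} yields $\widetilde A^{(l)} \equiv 0$ in $\O$ for $l=0,\dots,3$, i.e. $\frac{1}{m} A^{(0)} \equiv 0$ and $A^{(l)} \equiv 0$ for $l=1,2,3$; since $m$ is a fixed positive integer, $A^{(0)} \equiv 0$ as well, and so all four coefficients vanish identically, which is the conclusion of Theorem~\ref{MR1}.

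I do not expect a genuine analytic obstacle in this corollary: all the substance has been absorbed into Lemma~\ref{equiv} (the combinatorial identity rewriting $\Ac_m$ through $\Ac_m^\sharp$, together with the pointwise-vanishing step relying on Theorem~\ref{AJS}) and into Theorem~\ref{MR2} itself. The only point requiring care is the bookkeeping of which operator plays the role of $\Ac^\sharp$ — the reduction introduces nothing worse than the harmless rescaling $A^{(0)} \mapsto \frac{1}{m} A^{(0)}$ while leaving the first-, second-, and third-order tensors untouched, so the hypotheses of Theorem~\ref{MR2} are met verbatim.
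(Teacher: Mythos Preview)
Your proposal is correct and follows exactly the route the paper intends: invoke Lemma~\ref{equiv} to pass from identity~\eqref{id1} to~\eqref{id2}, recognize $\Ac_m^\sharp$ as an instance of $\Ac^\sharp$ with the harmless rescaling $A^{(0)}\mapsto \tfrac{1}{m}A^{(0)}$, and then apply Theorem~\ref{MR2}. The paper states this corollary as a one-line observation without writing out the details you supply, so your argument is a faithful expansion of the authors' reasoning.
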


\begin{remark}
If we had $\Ac^\sharp (v)$ vanishes identically for \emph{any} smooth function $v$, we then have (by density of smooth functions) that the operator $\Ac^\sharp$ is identically zero, i.e., the coefficients vanish identically. However, we only have that the operator $\Ac^\sharp$ vanishes on $\Ec_{\Gamma}$. Considering $v \in \Ec_{\Gamma}$ in the next section we prove that $\Ac^\sharp$ vanishes locally away from the inaccessible part $\Gamma$. Later, in Subsection \ref{Sec_UCP} we show that the local result can be extended on the whole domain $\O$ using unique continuation arguments.
\end{remark}

We will now focus exclusively on the proof of Theorem~\ref{MR2}.

\subsection{A local version of Theorem \ref{MR2}}\label{Sec_local} 
In this subsection we prove that the coefficients vanish near points on the accessible part $\p\O \setminus \Gamma = \Sigma$ of the boundary. 
Our strategy here is to construct biharmonic functions vanishing, up to order one, on a closed subset of the boundary (see Lemma \ref{construction}). Using these special solutions and using a specific decomposition of the symmetric tensor fields, in Lemma \ref{local} we prove that the tensor fields vanish locally near a part of the boundary.

For simplicity of the arguments, let us consider our domain to be as follows.
Fix $x_0 \in \Sigma$. Without loss of generality (see Remark \ref{difficult_remark}),
we assume that $x_0$ is the origin, $\overline{\O} \subset B(-e_1,1), \Gamma \subset \{x_1 < -c\}$ for some $c>0$ and that the hyperplane $\{x_1 = 0\}$ is the tangent plane to $\O$ at the origin $\in \Sigma$. Hence, $\PD \O \cap \PD (B(-e_1, 1)) = \{0\} \subset \Sigma$. We need to construct biharmonic functions which vanish on $\Gamma$. We now prove that the coefficients vanish in a neighborhood of the origin in this new coordinate system, see Lemma~\ref{local} for the precise statement.
\begin{remark}\label{difficult_remark}
For an arbitrary open bounded domain with smooth boundary, we can use a change of variables to obtain the geometric setup as described above. Since we want to retain $(-\Delta)^2$ as our principal operator, we can use a conformal change of variables. One can obtain that by using a generalized Kelvin transform. We omit the construction here since it is explicitly given in \cite[Section 2.1]{agrawal2023linearized}.
\end{remark}

Let $\chi \in C_c^\infty(\R^n)$ be a cut-off function which is $1$ in a neighborhood of $\Gamma$. We choose $\chi$ such a way that $K := (\mathrm{supp} \, \chi \cap \PD \O) \subset \{x_1 < -c\}$. Let $H_K(y) := \sup \{(x \cdot y) \,:\, x\in K\}$ denote the support function of $K$. The construction of a special solution $u \in \Ec_{\Gamma}$ is based on the following lemma (see also \cites{DSFKSU,agrawal2023linearized,lai2020partial}). Recall that
\[
\Vc = \{ \xi \in \Cb^n: \xi \cdot \xi = 0, \,\, \text{and} \,\, \mathrm{Im}(\xi_1) \geq 0\}.
\]

\begin{lemma}\label{construction}
    Let $\xi \in \Vc$ and $a(x) \in C^{\infty}(\overline{\O})$ satisfying
    \[
    \Delta a \equiv \mathrm{constant}, \quad \text{and} \quad \sum\limits_{i,j} \PD_{x_ix_j} a(x) \xi_i \xi_j = 0, \quad \mbox{in }\O.
    \]
    Then, for any $h > 0$, there exists $u \in \Ec_{\Gamma}$ of the form
    \[
    u(x;h) = e^{-\I x \cdot \xi / h} a(x)+ r(x; h),
    \]
    where the remainder term $r$ satisfies the estimate
    \[
    \|r\|_{H^2(\O)} \leq C \lb 1 + \frac{|\xi|^2}{h^2} + \frac{|\xi|^4}{h^4} \rb^{1/2} e^{\frac{1}{h} H_K (\mathrm{Im}(\xi))},
    \]
    with $C>0$ is independent of $\xi$ and $h$.

    Moreover, $r \in C^\infty(\overline{\O})$, and for $m \in \Nb$ with $m \geq \left[ \frac{n}{2} \right] + 2$, satisfies the estimate
    \[
    \|r\|_{C^3(\O)} \leq C \lb 1 + \frac{|\xi|^2}{h^2}\rb^{\frac{m+2}{2}} e^{-\frac{c}{h} \mathrm{Im}(\xi_1)} e^{\frac{1}{h} |\mathrm{Im}(\xi')|},
    \]
    for some constant $C>0$ independent of $\xi$ and $h$.
\end{lemma}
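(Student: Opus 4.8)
\textbf{Proof proposal for Lemma~\ref{construction}.}

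The plan is to produce $u$ of the prescribed form by the standard Complex Geometric Optics (CGO) construction adapted to the biharmonic operator, and then to correct the boundary values on $\Gamma$ using a cut-off together with a solvability estimate for $(-\Delta)^2$. First I would set $w(x;h) = e^{-\I x\cdot\xi/h} a(x)$ and compute $(-\Delta)^2 w$. The crucial observation is that $\Delta\bigl(e^{-\I x\cdot\xi/h} a\bigr) = e^{-\I x\cdot\xi/h}\bigl(-\tfrac{1}{h^2}(\xi\cdot\xi)a - \tfrac{2\I}{h}\sum_i \xi_i \PD_{x_i}a + \Delta a\bigr)$; since $\xi\in\Vc$ we have $\xi\cdot\xi=0$, and the hypothesis on $a$ that its Hessian annihilates $\xi^{\odot 2}$ will, upon applying $\Delta$ a second time, kill the leading $h^{-2}$ and $h^{-3}$ contributions, while $\Delta a\equiv\text{const}$ ensures the $h^{-1}$ term coming from $\sum_{i}\xi_i\PD_{x_i}(\Delta a)$ vanishes as well. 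Thus $(-\Delta)^2 w = e^{-\I x\cdot\xi/h} P(x;h)$ where $P$ is a polynomial in $1/h$ of degree at most $2$ with coefficients bounded by derivatives of $a$, so $\|(-\Delta)^2 w\|_{L^2(\O)} \le C\bigl(1 + |\xi|^2/h^2 + |\xi|^4/h^4\bigr)^{1/2}$, uniformly as stated.

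Next I would choose the cut-off: let $\chi$ be the given cut-off that is $1$ near $\Gamma$, so that $\chi w$ agrees with $w$ near $\Gamma$; we seek $u = w - \chi w + r = (1-\chi)w + r$, but it is cleaner to keep $u = w + r$ and arrange $r$ to cancel $w$ to first order on $\Gamma$. Concretely, one sets up the boundary value problem
\[
(-\Delta)^2 r = -(-\Delta)^2 w \quad\text{in }\O, \qquad (r,\PD_\nu r)\big|_{\PD\O} = -(w,\PD_\nu w)\,\eta\big|_{\PD\O},
\]
where $\eta$ is a cut-off equal to $1$ on $K = \supp\chi\cap\PD\O$ and supported in $\{x_1<-c\}$, so that $(u,\PD_\nu u)|_\Gamma = 0$ and hence $u\in\Ec_\Gamma$. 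Here the key point is that on $K\subset\{x_1<-c\}$ the inhomogeneous boundary data carry the exponentially small factor coming from the phase: $|e^{-\I x\cdot\xi/h}| = e^{x\cdot\mathrm{Im}(\xi)/h} \le e^{H_K(\mathrm{Im}(\xi))/h}$ for $x\in K$. Using the well-posedness and elliptic estimates for $(-\Delta)^2$ (Theorem~2.19 of Gazzola--Grunau--Sweers, already invoked in the excerpt), together with a trace/extension argument to realize the boundary data, one obtains
\[
\|r\|_{H^2(\O)} \le C\Bigl(\|(-\Delta)^2 w\|_{L^2(\O)} + e^{\frac1h H_K(\mathrm{Im}(\xi))}\Bigr) \le C\Bigl(1+\frac{|\xi|^2}{h^2}+\frac{|\xi|^4}{h^4}\Bigr)^{1/2} e^{\frac1h H_K(\mathrm{Im}(\xi))},
\]
since in the regime of interest $H_K(\mathrm{Im}(\xi)) \ge 0$ (or one simply absorbs the polynomial factor), giving the first estimate. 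The smoothness $r\in C^\infty(\overline\O)$ follows from interior and boundary elliptic regularity because $(-\Delta)^2 w$ and the boundary data are smooth.

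For the $C^3$ estimate I would bootstrap: having $r\in H^2$, elliptic regularity for the biharmonic problem upgrades $\|r\|_{H^{s+4}(\O)} \le C\bigl(\|(-\Delta)^2 w\|_{H^s} + \text{boundary data in }H^{s+7/2}\times H^{s+5/2}\bigr)$, and each derivative hitting $w$ costs a factor $|\xi|/h$. Choosing $s$ with $s+4 > n/2 + 3$, i.e. $s \ge m-1$ for $m\ge[n/2]+2$, Sobolev embedding $H^{s+4}\hookrightarrow C^3$ yields $\|r\|_{C^3(\O)} \le C(1+|\xi|^2/h^2)^{(m+2)/2}\,\|(\text{data})\|$, and the data are bounded on $K$ by $e^{-\frac ch \mathrm{Im}(\xi_1)} e^{\frac1h|\mathrm{Im}(\xi')|}$ because $H_K(\mathrm{Im}(\xi)) = \sup_{x\in K}(x\cdot\mathrm{Im}(\xi)) \le -c\,\mathrm{Im}(\xi_1) + |\mathrm{Im}(\xi')|$ on $K\subset\{x_1<-c\}$. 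Collecting terms gives the second estimate. The main obstacle I anticipate is bookkeeping the $h$-dependence carefully through the two elliptic estimates — in particular making sure the boundary data, which is $w$ times a fixed cut-off restricted to the boundary, is estimated in the right Sobolev trace norms with the correct powers of $|\xi|/h$, and verifying that the hypotheses $\xi\cdot\xi=0$, $\PD^2 a\cdot\xi^{\odot2}=0$, $\Delta a = \text{const}$ really do eliminate \emph{all} negative powers of $h$ down to $h^0$ in $(-\Delta)^2 w$ beyond $h^{-2}$; this is a direct but slightly delicate computation that I would carry out explicitly.
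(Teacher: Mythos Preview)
Your overall architecture (define $w=e^{-\I x\cdot\xi/h}a$, correct the boundary values on $\Gamma$ by solving a biharmonic Dirichlet problem for $r$, then bootstrap via elliptic regularity and Sobolev embedding) matches the paper's proof, but there is a genuine gap in how you treat $(-\Delta)^2 w$.

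The paper's proof hinges on the fact that the hypotheses force $(-\Delta)^2 w\equiv 0$ \emph{exactly}, so that $r$ solves the \emph{homogeneous} problem $(-\Delta)^2 r=0$ with boundary data $-\chi\,a\,e^{-\I x\cdot\xi/h}$ supported on $K$. You instead allow a residual $(-\Delta)^2 w=e^{-\I x\cdot\xi/h}P(x;h)$ with $P$ possibly nonzero of degree $\le 2$ in $1/h$, and then write
\[
\|r\|_{H^2}\le C\bigl(\|(-\Delta)^2 w\|_{L^2(\O)}+e^{\frac1h H_K(\mathrm{Im}\xi)}\bigr)\le C\Bigl(1+\tfrac{|\xi|^2}{h^2}+\tfrac{|\xi|^4}{h^4}\Bigr)^{1/2}e^{\frac1h H_K(\mathrm{Im}\xi)}.
\]
The second inequality fails on two counts. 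First, if $P\not\equiv 0$ then $\|(-\Delta)^2 w\|_{L^2(\O)}$ carries the factor $\sup_{x\in\O}|e^{-\I x\cdot\xi/h}|=e^{\frac1h\sup_{\O}x\cdot\mathrm{Im}\xi}$, not merely the polynomial $(1+|\xi|^2/h^2+\dots)^{1/2}$; the exponential over all of $\O$ is \emph{not} controlled by $e^{\frac1h H_K(\mathrm{Im}\xi)}$. Second, your escape clause ``in the regime of interest $H_K(\mathrm{Im}\xi)\ge 0$'' is false precisely where the lemma is used: for $\mathrm{Im}\xi'=0$ and $\mathrm{Im}\xi_1>0$ one has $H_K(\mathrm{Im}\xi)\le -c\,\mathrm{Im}\xi_1<0$, and this negativity is exactly what yields the exponential decay $e^{-\frac{c}{h}\mathrm{Im}\xi_1}$ of the remainder that is exploited in Lemma~\ref{local}. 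A polynomially large interior source term would destroy that decay.

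The fix is to carry out the computation you defer at the end and observe that it gives $P\equiv 0$: with $\xi\cdot\xi=0$ one gets $\Delta w=e^{-\I x\cdot\xi/h}\bigl(-\tfrac{2\I}{h}\xi\cdot\nabla a+\Delta a\bigr)$, and applying $\Delta$ again the $h^{-2}$ term is $-\tfrac{4}{h^2}\sum\xi_i\xi_j\PD_{ij}a=0$, the $h^{-1}$ term is $-\tfrac{4\I}{h}\xi\cdot\nabla(\Delta a)=0$, and the $h^0$ term is $\Delta^2 a=0$ since $\Delta a$ is constant. Once you know $w$ is biharmonic, your boundary-correction $r$ solves the homogeneous problem with data localized on $K$, and your $H^2$ and $C^3$ estimates go through exactly as in the paper.
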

\begin{proof}
The first part of the lemma is proved in \cite[Lemma~2.1]{agrawal2023linearized}. We briefly recall it below since the later part depends on it. 
Fix $\xi$ and $a(x)$ as in the statement of the lemma. Then the function $e^{-\I x \cdot \xi/ h} a(x)$ is biharmonic in $\O$. For $\chi$ as above, consider $r$ to be the solution to the boundary value problem
    \begin{equation}\label{eq_r}
    \begin{aligned}
        \begin{cases}
            (-\Delta)^2 r &= 0 \quad \text{in} \quad \O, \\
            \lb r, \PD_\nu r \rb &= \lb - (a e^{-\I x \cdot \xi/h}) \chi, -\PD_\nu \lb (a e^{-\I x \cdot \xi/h}) \chi \rb  \rb \quad \text{on} \quad \PD \O.
        \end{cases}
    \end{aligned}
    \end{equation}
    The well-posedness of the above problem gives the estimate
    \[
    \|r\|_{H^2(\O)} \leq C \lb 1+\frac{|\xi|^2}{h^2} + \frac{|\xi|^4}{h^4} \rb^{1/2} e^{\frac{1}{h} H_K (\mathrm{Im}\, \xi)}.
    \]
    Note that since $K \subset \{x_1 < -c\}$, when $\mathrm{Im}(\xi_1) \geq 0$, the estimate on $r$ can be written as
    \begin{align*}
        \|r\|_{H^2(\O)} \leq C \lb 1+\frac{|\xi|^2}{h^2} + \frac{|\xi|^4}{h^4} \rb^{1/2} e^{-\frac{c}{h} \mathrm{Im}\xi_1 } \, e^{\frac{1}{h} |\mathrm{Im}(\xi')|},
    \end{align*}
    where $\xi = (\xi_1, \xi')$. This proves the first part of the lemma.

    For the next part, observe that for any $m \in \Nb$ and $\xi \in \Vc$, elliptic regularity ensures that the function $r$ satisfies
    \begin{equation*}
    \begin{aligned}
        \|r\|_{H^{m+2}(\O)} & \leq C \lb\Big\|\chi a e^{-\I x \cdot \xi/h} \Big \|_{H^{m+3/2}(\PD \O)} + \Big\| \PD_\nu \lb \chi a e^{-\I x \cdot \xi/h} \rb \Big\|_{H^{m+1/2}(\PD \O)} \rb \\
        & \leq C \lb \sum_{i=0}^{m+2}\frac{|\xi|^{2i}}{h^{2i}}  \rb^{\frac{1}{2}} e^{-\frac{c}{h} \mathrm{Im}\xi_1 } \, e^{\frac{1}{h} |\mathrm{Im}(\xi')|}\\
        & \leq C \lb 1+\frac{|\xi|^2}{h^2} \rb^{\frac{m+2}{2}} e^{-\frac{c}{h} \mathrm{Im}\xi_1 } \, e^{\frac{1}{h} |\mathrm{Im}(\xi')|}.
    \end{aligned}
    \end{equation*}
This estimate provides an upper bound for $\|r\|_{H^{2+m}(\O)}$. In particular, by Sobolev embedding theorem \cite[Ch.~5, Theorem~6]{evans}, we find that for $m \geq \left[ \frac{n}{2} \right] + 2$, and for $\xi \in \Vc$, we have
   \[
   \|r\|_{C^3(\O)} \leq C \|r\|_{H^{m+2}(\O)}
    \leq C \lb 1+\frac{|\xi|^2}{h^2}\rb^{\frac{m+2}{2}} e^{-\frac{c}{h} \mathrm{Im}\xi_1 } \, e^{\frac{1}{h} |\mathrm{Im}(\xi')|}.
    \]
    This completes the proof since $C$ here is independent of $\xi$ and $h$.
\end{proof}

The main result of this subsection is the following.
\begin{lemma}\label{local}
    Under the assumptions of theorem~\ref{MR2}, the coefficients vanish in a neighborhood of the origin.
\end{lemma}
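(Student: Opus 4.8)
The plan is to insert $m+1$ complex-geometric-optics (CGO) solutions of the type produced in Lemma~\ref{construction} into the integral identity of Theorem~\ref{MR2} and extract pointwise information about the coefficients near the origin by a careful stationary-phase / leading-order analysis in the semiclassical parameter $h$. Concretely, I would take $v_0 = u(x;h) = e^{-\I x\cdot\xi/h} a(x) + r(x;h)$ with $\xi \in \Vc$ of the form $\xi = \xi(h)$ chosen so that $\mathrm{Im}(\xi_1)$ stays bounded below by a positive multiple of $|\xi|$ (so that the remainder estimate $\|r\|_{C^3} \le C(1+|\xi|^2/h^2)^{(m+2)/2} e^{-c\,\mathrm{Im}(\xi_1)/h} e^{|\mathrm{Im}(\xi')|/h}$ is exponentially small after we localize near the origin), and take $v_1, \dots, v_m$ to be real-valued biharmonic functions in $\Ec_\Gamma$ that can be made to approximate, near the origin, an arbitrary prescribed nonvanishing value — for instance a fixed solution supported in amplitude near $x_0=0$, or simply a localizing bump obtained from Lemma~\ref{construction} with $\xi$ purely imaginary in the $x_1$-direction. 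Since $\Ac^\sharp(x,D)v_0$ applied to the exponential produces the leading term $e^{-\I x\cdot\xi/h}\langle \Ac^\sharp(x,\xi/h), a(x)\rangle$-type factor, dividing by the appropriate power of $h$ and sending $h \to 0$ should isolate $\langle A^{(3)}(x), \xi^{\odot 3}\rangle a(x)\prod v_j(x)$ evaluated at the origin.

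The key steps, in order, are: (1) fix the geometric normalization as in the paragraph preceding the lemma so that $\Gamma \subset \{x_1 < -c\}$ and the origin lies on the accessible boundary; (2) for $\xi \in \Vc$, construct $a(x)$ satisfying the two constraints of Lemma~\ref{construction} (e.g.\ a linear or quadratic polynomial phase, or simply $a\equiv 1$, since $\xi\cdot\xi=0$ makes $e^{-\I x\cdot\xi/h}$ automatically harmonic hence biharmonic), and form $v_0 = u(x;h)$; (3) substitute into $\int_\O (\Ac^\sharp(x,D)v_0)\,v_1\cdots v_m\,dx = 0$, expand $\Ac^\sharp(x,D)(e^{-\I x\cdot\xi/h}a)$ by Leibniz — the top-order contribution is $h^{-3}e^{-\I x\cdot\xi/h}\langle A^{(3)}(x),\xi^{\odot 3}\rangle a(x)$ plus lower-order-in-$h^{-1}$ terms involving $A^{(2)},A^{(1)},A^{(0)}$ and derivatives of $a$; (4) multiply the identity by $h^3$, use the remainder estimates to discard the $r$-contributions and the terms of order $h^{-2}, h^{-1}, h^0$ (which vanish in the limit after multiplication by $h^3$), and pass to the limit $h\to 0$; because the factor $e^{-\I x\cdot\xi/h}$ has no real oscillation in the localized region when $\xi$ is chosen with the right sign of $\mathrm{Im}\xi_1$, dominated convergence (or a Riemann–Lebesgue / concentration argument) forces $\langle A^{(3)}(0),\xi^{\odot 3}\rangle = 0$ once we also let $v_1\cdots v_m$ concentrate at the origin with nonzero value; (5) since this holds for all $\xi\in\Vc$, invoke the decomposition \eqref{dec-en}, $A^{(3)} = \widetilde A^{(3)} + i_\d a^{(1)}$, and apply Lemma~\ref{3tensor} to conclude $\widetilde A^{(3)}(0) = 0$; then repeat with the next order in $h$ (now knowing $\widetilde A^{(3)} = 0$ near $0$, the surviving top term involves $A^{(2)}$ and $a^{(1)}$) to peel off $\widetilde A^{(2)}$ via Lemma~\ref{2tensor}, then $A^{(1)}$ via Lemma~\ref{1tensor}, then $A^{(0)}$; and finally use Theorem~\ref{AJS} (density of products of two biharmonic functions) together with the already-established vanishing of the trace-free parts to kill the remaining isotropic pieces $a^{(1)}, a^{(0)}$ — indeed once $\widetilde A^{(3)} = \widetilde A^{(2)} = 0$ the operator $\Ac^\sharp$ reduces, modulo $A^{(1)},A^{(0)}$, to $a^{(1)}\cdot(\text{third-order Laplacian-type})$ and $a^{(0)}\Delta$ acting on $v_0$, which is exactly the situation handled by Theorem~\ref{AJS}.

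I expect the main obstacle to be the bookkeeping in step (3)–(4): separating the genuine leading-order term from the cascade of lower-order terms in powers of $h^{-1}$, each of which mixes several tensors $A^{(l)}$ and derivatives of the amplitude $a$, and arranging the choices of $\xi$, $a$, and the auxiliary solutions $v_1,\dots,v_m$ so that at each stage exactly one trace-free tensor's pairing with $\xi^{\odot l}$ is isolated while the contributions of the already-known-to-vanish tensors drop out and the not-yet-controlled isotropic parts do not interfere. A secondary technical point is justifying the limit $h \to 0$ rigorously: one must either localize the integral to a small neighborhood of the origin using a cutoff (absorbing the far-away part into the exponentially small remainder and the decay of $e^{-c\,\mathrm{Im}\xi_1/h}$ off $\{x_1 \approx 0\}$) and then use that $a(x)\prod v_j(x)$ is continuous, or scale $\xi$ together with a blow-up at $0$; care is needed because $\Vc$ is not a linear space, so one cannot simply differentiate in $\xi$, and the explicit choices $\xi = \I e_1 \pm e_j$, $\xi = \I e_1 + \tfrac{1}{\sqrt 2}(e_j+e_k)$, etc., used in Lemmas~\ref{1tensor}–\ref{3tensor} must be fed in one at a time, giving a finite system of equations for the components of $\widetilde A^{(l)}(0)$ exactly as in those lemmas. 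Finally, to upgrade "vanishing at the origin" to "vanishing in a neighborhood," I would note that $x_0 \in \Sigma$ was arbitrary and the construction is stable under small perturbations of the base point, so the same argument applies at every nearby boundary point, and combined with a similar interior localization one concludes the coefficients vanish in a one-sided neighborhood of $x_0$ in $\overline\O$.
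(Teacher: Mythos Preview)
Your proposal misses the key simplifying idea that makes the proof work cleanly. The paper does \emph{not} work with the integral identity at all in the local argument. Instead, it first invokes Lemma~\ref{equiv} (whose proof uses Theorem~\ref{AJS}) to strip off the factors $v_1,\dots,v_m$: since $m\ge 2$, density of products of two biharmonic functions in $L^2(\O)$ forces $(\Ac^\sharp v_0)\,v_1\cdots v_{m-2}\equiv 0$ pointwise, and then the measure-zero property of biharmonic zero sets gives the \emph{pointwise} identity
\[
\Ac^\sharp(v)(x)=0\qquad\text{for every }x\in\O\text{ and every }v\in\Ec_\Gamma.
\]
From here there is no integral, no stationary phase, and no concentration argument. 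One simply plugs in $v=e^{-\I x\cdot\xi/h}+r(x;h)$, multiplies the resulting pointwise equation by $h^3$, $h^2$, $h$, $1$ successively, and lets $h\to 0$; the remainder estimate of Lemma~\ref{construction} kills the $r$-terms for every fixed $x\in\O\cap\{x_1>-c\}$, and one reads off $\langle A^{(3)}(x),\xi^{\odot 3}\rangle=0$, then $\langle A^{(2)}(x),\xi^{\odot 2}\rangle=0$, etc., directly for all such $x$ and all $\xi\in\Vc$. The trace-free lemmas then give $\widetilde A^{(3)}=\widetilde A^{(2)}=0$ and $A^{(1)}=A^{(0)}=0$ on the whole region $\{x_1>-c\}$ at once --- no need to vary the base point and patch neighborhoods.

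By contrast, your route requires making $v_1\cdots v_m$ ``concentrate at the origin,'' which you never justify. Your suggested construction --- taking $\xi$ purely imaginary in the $x_1$-direction --- fails outright because $\xi=\I t e_1$ has $\xi\cdot\xi=-t^2\neq 0$, so $\xi\notin\Vc$ and Lemma~\ref{construction} does not apply. Even if you use a legitimate $\xi\in\Vc$ with $\mathrm{Im}\,\xi'=0$, the modulus $|e^{-\I x\cdot\xi/h}|=e^{x_1\,\mathrm{Im}\,\xi_1/h}$ is bounded by $1$ on $\O$ and tends to $0$ a.e.\ as $h\to 0$, so after multiplying by $h^3$ your integral tends to $0$ by dominated convergence and yields no information; extracting a boundary-layer contribution at the origin would require a Laplace-method normalization you do not describe.

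Finally, your plan for the isotropic remainders $a^{(1)},a^{(0)}$ via Theorem~\ref{AJS} is also off. Once the pointwise identity reduces to $a^{(1)}\!\cdot D(-\Delta)v + a^{(0)}(-\Delta)v=0$, the paper inserts the CGO with nonconstant amplitude $a(x)=x_l$ (which satisfies the hypotheses of Lemma~\ref{construction}), giving $(-\Delta)v$ a nontrivial leading term; Lemma~\ref{1tensor} then finishes. Theorem~\ref{AJS} is an integral-identity statement and is not the tool used at this step.
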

\begin{proof}
    Using lemma~\ref{equiv}, we have
    \begin{equation}\label{A_m}
        \Ac^\sharp(v) \equiv 0 \quad \text{in } \O \text{ for all } v \in \Ec_{\Gamma}.
    \end{equation}
    For ease of notation, we often suppress the dependence of the coefficients on the variable $x$. For $\xi \in \Vc$, consider 
    \[
        v = e^{-\frac{\I x \cdot \xi}{h}} + r(x;h)
    \] 
    in \eqref{A_m} to get
    \begin{equation}\label{Am1}
        \sum\limits_{l=1}^3 \lb- \frac{1}{h} \rb^l A^{(l)}_{i_1 \dots i_l} \xi_{i_1} \dots \xi_{i_l} + A^{(0)} 
        = - e^{\frac{\I x \cdot \xi}{h}} \lb \sum\limits_{l=1}^3 A^{(l)}_{i_1 \dots i_l} D^{(l)}_{i_1 \dots i_l} r + A^{(0)} r \rb
    \end{equation}
    Recall that the coefficients $A^{(0)},\cdots,A^{(3)}$ are bounded in $\overline{\O}$. For $x \in (\O \cap \{x_1 > -c\})$, and a multi-index $|\beta| \leq 3$, we have
   \begin{align*}
        |(\PD^\beta r)(x) e^{\frac{\I x \cdot \xi}{h}}| & \leq C e^{-\frac{x_1}{h} \mathrm{Im}(\xi_1)} \, e^{\frac{1}{h} |\mathrm{Im}(\xi')|} \, \|r\|_{C^3(\overline{\O})} \\
        & \leq C \lb 1 + \frac{|\xi|^2}{h^2}\rb^{\frac{m+2}{2}} e^{-\frac{(c+x_1)}{h} \mathrm{Im}(\xi_1)} \, e^{\frac{2}{h} |\mathrm{Im}(\xi')|}.
    \end{align*}
    Therefore, multiplying \eqref{Am1} by $h^3$ and letting $h \to 0$, we find
    \[
    \sum_{i,j,k}A^{(3)}_{ijk}(x) \xi_i \xi_j \xi_k = 0,
    \]
    for all $x \in (\O \cap \{x_1 > -c\})$, and $\xi \in \Vc$. Splitting $A^{(3)} = \Tilde{A}^{(3)} + i_\d a^{(1)}$ using its trace free decomposition (Lemma~\ref{decomposition}), we observe
    \[
    \sum\limits_{i,j,k} \Tilde{A}^{(3)}_{ijk}(x) \xi_i \xi_j \xi_k = 0
    \]
    for all $x \in (\O \cap \{x_1 > -c\})$, and $\xi \in \Vc$. Using Lemma~\ref{3tensor}, we conclude that $\Tilde{A}^{(3)}(x)= 0$ for $x \in \O \cap \{x_1 > -c\}$. Since $\sum_{i,j,k}(i_\delta a^{(1)})_{ijk} \xi_i \xi_j \xi_k = 0$ for $\xi \in \Vc$, for $x \in (\O \cap \{x_1 > -c\})$, \eqref{Am1} gives
    \begin{equation}\label{Am2}
        \frac{1}{h^2} \sum_{i,j}A^{(2)}_{ij} \xi_i \xi_j - \frac{1}{h} \sum_{i}A^{(1)}_i \xi_i + A^{(0)} = - e^{\frac{\I x \cdot \xi}{h}} \lb \sum\limits_{l=1}^3 A^{(l)}_{i_1 \dots i_l} D^{(l)}_{i_1 \dots i_l} r + A^{(0)} ~r \rb.
    \end{equation}
Multiplying by $h^2$ and letting $ h \to 0$, we get for $x \in \O \cap \{x_1 > -c\}$ and $\xi \in \Vc$:
    \[
    \sum_{i,j}A^{(2)}_{ij} \xi_i \xi_j = 0.
    \]
    Again using the trace free decomposition of $A^{(2)}$, we observe
    \[
    \sum_{i,j}\Tilde{A}^{(2)}_{ij} \xi_i \xi_j = 0.
    \]
    Using Lemma~\ref{2tensor}, we conclude that $\Tilde{A}^{(2)}(x) = 0$, for $x \in \O \cap \{x_1 > -c\}$. Thus, for such $x$, \eqref{Am2} becomes
    \begin{equation}\label{Am3}
        - \frac{1}{h} A^{(1)}_i \xi_i + A^{(0)} = -e^{\frac{\I x \cdot \xi}{h}} \lb \sum\limits_{l=1}^3 A^{(l)}_{i_1 \dots i_l} D^{(l)}_{i_1 \dots i_l} r + A^{(0)} ~r \rb.
    \end{equation}
    Multiplying by $h$ and then letting $h \to 0$, we get for $x \in \O \cap \{x_1 > -c\}$ and $\xi \in \Vc$:
    \[
    A^{(1)}_i \xi_i = 0.
    \]
    Using Lemma~\ref{1tensor}, we find $A^{(1)}(x) = 0$ for $x \in \O \cap \{x_1 > -c\}$. For such $x$, \eqref{Am3} gives
    \[
    A^{(0)} = -e^{\frac{\I x \cdot \xi}{h}} \lb \sum\limits_{l=1}^3 A^{(l)}_{i_1 \dots i_l} D^{(l)}_{i_1 \dots i_l} r + A^{(0)} ~r \rb.
    \]
    Letting $h \to 0$, we conclude that $A^{(0)}$ also vanishes for $x \in \O \cap \{x_1 > -c\}$.

    Going back to \eqref{A_m}, we get for $x \in \O \cap \{x_1 > -c\}$, 
    \[
    a^{(1)}_i D_i (-\Delta) v + a^{(0)} (-\Delta) v = 0, \quad \forall v \in \Ec_{\Gamma}.
    \]
    Fix $1 \leq l \leq n$, and consider
    \[
    v = x_l e^{-\I x \cdot \xi /h} + r(x;h) \in \mathcal{E}_\Gamma \quad (\text{due to Lemma~\ref{construction}}),
    \] 
    to get
    \[
    \frac{\I}{h^2} a^{(1)}_i \xi_i \xi_l - \frac{\I}{h} a^{(0)} \xi_l = -\frac{1}{2} e^{\I x \cdot \xi/h} \lb a^{(1)}_i D_i (-\Delta) r + a^{(0)} (-\Delta) r \rb.
    \]
    Proceeding as before, with this we conclude that $a^{(1)}$ and $a^{(0)}$ vanish in $\O \cap \{x_1 > -c\}$. Thus, we have shown that the tensor fields $A^{(0)},\cdots,A^{(3)}$ vanish in a neighborhood of the origin, namely $\{x_1 > -c\}$.
\end{proof}

\subsection{The coefficients vanish everywhere}\label{Sec_UCP} 
So far, in Lemma \ref{local} we proved that under the assumptions of Theorem~\ref{MR2}, the coefficients $A^{(k)}$, for $k=0,1,2,3$, vanish in a neighborhood of some point $x_0 \in \p\O$.
Here we prove Theorem~\ref{MR2} by showing that, under the same assumptions, we can uniquely continue to extend the neighborhood to the entire $\Omega$. The method is based on generalizing \cite[Section 2]{DSFKSU} for biharmonic functions.

\begin{proof}[Proof of Theorem~\ref{MR2}]
We prove that the coefficients $A^{(k)}$, for $k=0,\dots,3$, vanish everywhere in $\O$. Fix a point $x_1 \in \O$ and let $\Theta:[0,1] \to \overline{\O}$ be a smooth curve joining $x_0 \in \PD \O \setminus \Gamma$ to $x_1$, and satisfying $\Theta(0) = x_0, \Theta'(0)$ is the interior normal to $\PD \O$ at $x_0$ and $\Theta(t) \in \O$ for all $t \in (0,1]$. For $\epsilon > 0$, consider the closed neighborhood of the curve ending at $\Theta(t)$ as
\[
\Theta_\epsilon(t) \coloneqq \{x \in \Rb^n~:~ \mathrm{dist}(x, \Theta ([0,t])) \leq \epsilon\}. 
\]
Define the set 
\[
I_{\epsilon} = \Big\{t \in [0,1]~:~ \lb A^{(3)}, A^{(2)}, A^{(1)}, A^{(0)} \rb ~\text{vanish a.e. in}~ \Theta_\epsilon(t) \cap \O\Big\}.
\]
By the local uniqueness Lemma~\ref{local}, for $\epsilon > 0$ small enough, $I_{\epsilon}$ is non-empty. It is easy to see that $I_{\epsilon}$ is a closed subset of $[0,1]$. In Lemma~\ref{Lem_UCP} below, we prove that $I_{\epsilon}$ is also an open subset of $[0,1]$ for $\epsilon>0$ small enough.

So, assuming Lemma~\ref{Lem_UCP} for the moment, we see that $I_{\epsilon} \subset [0,1]$ is open as well as closed. Therefore, $I_{\epsilon} = [0,1]$ for some $\epsilon>0$ and hence the coefficients $A^{(k)}$ vanishes in an open neighborhood of $x_1$. Since we choose $x_1 \in \O$ arbitrarily, the coefficients must vanish on entire $\Omega$.
\end{proof}

To complete the proof of Theorem~\ref{MR2}, now in Lemma~\ref{Lem_UCP} we show that $I_{\epsilon}$ is open.

\begin{lemma}\label{Lem_UCP}
Under the assumptions of Theorem~\ref{MR2} and the notations defined above, $I_{\epsilon}$ is an open subset of $[0,1]$ for small enough $\epsilon>0$.
\end{lemma}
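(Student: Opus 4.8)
The plan is to show that $I_\epsilon$ is open by taking $t_0 \in I_\epsilon$ and proving that the coefficients vanish in $\Theta_\epsilon(t_0 + \eta) \cap \O$ for small $\eta > 0$. Since $(A^{(3)}, A^{(2)}, A^{(1)}, A^{(0)})$ vanish a.e.\ in $\Theta_\epsilon(t_0) \cap \O$, and $\Theta_\epsilon(t_0 + \eta)$ differs from $\Theta_\epsilon(t_0)$ only in a small region near $\Theta(t_0)$, it suffices to show the coefficients vanish in a neighborhood of $\Theta(t_0)$ inside $\O$. The strategy mirrors the local argument of Lemma~\ref{local}, but now the ``accessible'' region is an interior ball rather than a boundary patch: we use the set $\Theta_\epsilon(t_0) \cap \O$, where everything already vanishes, as the region on which we have freedom to prescribe Cauchy data.

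**First I would** set up the geometry: choose a ball $B = B(\Theta(t_0), \epsilon)$ and an auxiliary smooth domain $\O'$ with $\overline{\O} \subset \O'$, so that we may regard biharmonic functions on a slightly larger set. The key point is that by Lemma~\ref{equiv} (and its analogue for $\Ac^\sharp$), we have $\Ac^\sharp(v) \equiv 0$ in $\O$ for all $v \in \Ec_\Gamma$. I would then invoke the Runge-type approximation Lemma~\ref{Runge} to construct biharmonic functions on a neighborhood of $\Theta_\epsilon(t_0) \cap \overline{\O}$ that approximate, in $L^2$ (hence, after elliptic regularity on a slightly shrunk domain, in $C^3$), the complex-geometric-optics-type solutions $e^{-\I x \cdot \xi/h} a(x) + r(x;h)$ of Lemma~\ref{construction}. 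More precisely, since the coefficients already vanish on $\Theta_\epsilon(t_0) \cap \O$, I can build solutions $v \in \Ec_\Gamma$ whose restriction to a neighborhood of $\Theta(t_0)$ behaves like a CGO solution concentrated along a complex direction $\xi \in \Vc$, with the error controlled by the distance from $\Theta(t_0)$ to the region where vanishing is already known.

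**Then** I would repeat the asymptotic extraction from the proof of Lemma~\ref{local}: plugging such $v$ into $\Ac^\sharp(v) = 0$, multiplying by successive powers of $h$, and letting $h \to 0$, one peels off $\sum A^{(3)}_{ijk}\xi_i\xi_j\xi_k = 0$, then $\sum A^{(2)}_{ij}\xi_i\xi_j = 0$, then $\sum A^{(1)}_i \xi_i = 0$, then $A^{(0)} = 0$, each valid for all $\xi \in \Vc$ and all $x$ in a neighborhood of $\Theta(t_0)$ in $\O$. Applying the tensor-algebra Lemmas~\ref{3tensor}, \ref{2tensor}, \ref{1tensor} to the trace-free parts (via the decomposition \eqref{dec-en}) gives $\Tilde{A}^{(3)} = \Tilde{A}^{(2)} = A^{(1)} = A^{(0)} = 0$, and the residual equation $a^{(1)}_i D_i(-\Delta)v + a^{(0)}(-\Delta)v = 0$ is handled with the choice $v = x_l e^{-\I x\cdot\xi/h} + r$ exactly as in Lemma~\ref{local}, forcing $a^{(1)} = a^{(0)} = 0$. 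Hence all coefficients vanish near $\Theta(t_0)$, so $t_0 + \eta \in I_\epsilon$ for small $\eta$.

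**The main obstacle** I anticipate is the construction of admissible solutions $v \in \Ec_\Gamma$ that localize near the interior point $\Theta(t_0)$ with good enough error estimates: unlike in Lemma~\ref{local}, the phase $e^{-\I x\cdot\xi/h}$ is not globally small on the region where we lack information, so one must carefully use that the coefficients already vanish on $\Theta_\epsilon(t_0)\cap\O$ to absorb the problematic part of the integrand, and then use the Runge approximation Lemma~\ref{Runge} together with interior elliptic regularity to transfer an $L^2$ approximation into the $C^3$ bound needed to run the $h\to 0$ asymptotics. Controlling how the Runge approximation error depends on $h$ (so that it remains negligible after multiplication by $h^3, h^2, h$) is the delicate technical heart of the argument; this is precisely the step where the convexity/geometry of $\Theta_\epsilon(t_0)$ relative to the curve $\Theta$ enters, and why $\epsilon$ must be taken small.
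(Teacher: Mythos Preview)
Your outline differs substantively from the paper's proof, and the obstacle you flag in your last paragraph is genuine and unresolved. You propose to work with the \emph{pointwise} identity $\Ac^\sharp(v)\equiv 0$ for $v\in\Ec_\Gamma$ and to manufacture, via Runge approximation, functions $v\in\Ec_\Gamma$ that mimic a CGO near the interior point $\Theta(t_0)$ with $h$-uniform $C^3$ error. But Lemma~\ref{Runge} only yields $L^2(\O_1)$ approximation, it does not produce elements of $\Ec_\Gamma$ (the functions in $\Fc$ may fail to be biharmonic on all of $\O$ when $\mathrm{supp}(a)$ meets $\O\setminus\overline{\O_1}$), and nothing controls the approximation error quantitatively in $h$. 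A quantitative Runge theorem of the strength you would need is not available in the paper, and proving one is a separate and nontrivial undertaking.

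The paper avoids this entirely by working with the \emph{integral} identity instead. Starting from $\int_\O(\Ac^\sharp v_0)v_1\cdots v_m\,\D x=0$ for $v_i\in\Ec_\Gamma$, one chooses $v_i=G_2(\cdot,z_i)$ with $z_i\in\O_2\setminus\overline{\O}$ (these are in $\Ec_\Gamma$ by construction of $\O_2$), observes that the integral over $\O$ equals the integral over $\O_1$ because the coefficients already vanish on $\Theta_\epsilon(t)\supset\O\setminus\O_1$, and then notes the resulting function of $(z_0,\dots,z_m)$ is biharmonic in each variable, hence vanishes on all of $\O_2\setminus\overline{\O_1}$ by unique continuation. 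Only at this stage is Lemma~\ref{Runge} invoked, and purely qualitatively: it replaces each $G_2(\cdot,z_i)$ appearing \emph{without} derivatives by an arbitrary $u_i\in\Ec_1$. The single factor carrying $D^{(l)}$ is handled by integrating by parts (no boundary terms survive, since on $\PD\O_1\cap\PD\O$ the functions and normal derivatives vanish, and on $\PD\O_1\cap\O$ the coefficients and all their derivatives vanish), applying Runge, and integrating back. This yields the full integral identity on $\O_1$ with test functions in $\Ec_1$, and Lemma~\ref{local} applied to $\O_1$ then gives vanishing near $\PD\O_1\setminus(\PD\O_1\cap\PD\O_2)$, hence on $\Theta_\epsilon(\tau)$ for some $\tau>t$. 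No $h$-dependent approximation, no quantitative Runge, and no attempt to build CGOs in $\Ec_\Gamma$ localized at interior points is required.
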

\begin{proof}
Without loss of generality, let us assume that $\epsilon > 0$ is small enough such that $\Theta_\epsilon(1) \subset \overline{\O}$.
If $t \in I$, there is $\epsilon>0$ small enough such that $\left(\PD \Theta_\epsilon(t) \cap \PD \O\right) \subset (\PD \O \setminus \Gamma)$. Let $\O_1 \subset \O$ be an open neighborhood of $\left(\O \setminus \overline{\Theta_\epsilon(t)}\right)$, with smooth and connected boundary, in $\O$. One can check that $\Gamma \subset (\PD \O \cap \PD \O_1)$.
Let us define $\O_2 \supset (\O \cup B(x_0,\epsilon'))$ with $\epsilon'>0$ small enough, such that $\O_2$ is open, connected, $\PD\O_2$ is smooth and
\[
(\PD \O_2 \cap \PD \O) \supset (\PD \O_1 \cap \PD \O) \supset \Gamma.
\]
For a nice visual description of the sets $\O_1$ and $\O_2$, see \cite[Figure 1, Lemma 2.2]{DSFKSU}, see also \cite{Katya-Uhlmann-gradient-nonlinearities}.

Let $G_2$ denote the Green kernel associated to the open set $\O_2$ corresponding to the bilaplacean, i.e., for each $x \in \O_2$,
\[\begin{aligned}
    (-\Delta_y)^2 G_2 (x,y) &= \delta(x-y), \quad &&\mbox{in }\O_2, \\
    (G_2(x,\cdot),\PD G_2(x,\cdot))  &= (0,0), \quad &&\mbox{on }\PD\O_2.\\
    \end{aligned}\]
    For simplicity, let us denote $U(y,z_2) = \Ac^\sharp (y,D_y) G_2(y,z_2)$ for $y \in \O_1$ and $z_2 \in \O_2 \setminus \overline{\O_1}$. For $z_0, z_1, z_2 \in \O_2 \setminus \overline{\O_1}$, the function
    \[
    (z_0, z_1,z_2) \mapsto \int\limits_{\O_1} U(y,z_2) G_2(y,z_1) G_2(y,z_0) ~\D y
    \]
    is biharmonic in each $z_r$, $r=0,1,2$. Furthermore, 
    \[
    \int\limits_{\O_1} U(y,z_2) G_2(y,z_1) G_2(y,z_0) ~\D y = \int\limits_{\O} U(y,z_2) G_2(y,z_1) G_2(y,z_0) ~\D y,
    \]
    since the coefficients in $U$ vanish on $\O \setminus \O_1 \subset \Theta_\epsilon(t)$. For $z_r \in \O_2 \setminus \overline{\O}$, $r=0,1,2$, the functions $y \mapsto G_2(z_r,y)$ are smooth in $\overline{\O}$, biharmonic in $\O$, and vanish, along with their normal derivatives, on $\Gamma$ (i.e. they are in $\Ec_{\Gamma}$). Thus, by the assumption of Theorem~\ref{MR2}, we obtain
    \begin{equation}\label{UCP_eq_1}
    \int\limits_{\O} U(y,z_2) G_2(y,z_1) G_2(y,z_0) ~\D y = 0, \quad \mbox{for all } z_0,z_1,z_2  \in \O_2 \setminus \overline{\O}.
    \end{equation}
Multiplying by $a(z_0) \in C^\infty(\overline{\O_2})$ such that $\mathrm{supp} (a) \subset \O_2 \setminus \overline{\O_1}$, and integrating \eqref{UCP_eq_1} over $z_0 \in \O_2$, we obtain
    \[
    \int\limits_{\O_2} \int\limits_{\O_1} U(y,z_2) G_2(y,z_1) G_2(y,z_0) a(z_0)~\D y ~\D z_0 = 0.
    \]
    Using Fubini's theorem and invoking Lemma~\ref{Runge}, we find
    \[
    \int\limits_{\O_1} U(y,z_2) G_2(y,z_1) u_0(y) ~\D y = 0,
    \]
    for any $u_0 \in \Ec_1$, where
    \[
    \Ec_1 \coloneqq \{u \in C^\infty(\O_1)~:~ (-\Delta)^2 u = 0, \lb u, \PD_\nu u \rb\lvert_{\PD \O_1 \cap \PD \O_2} = 0\}.
    \]
    Similarly, multiplying again by a smooth function of $z_1$ supported in $\O_2 \setminus \overline{\O_1}$, and invoking Lemma~\ref{Runge} we get
    \begin{equation}\label{IBP}
    \begin{aligned}
    \int\limits_{\O_1} u_0(y) u_1(y) \Big ( \sum_{k=0}^{3} \sum_{i_1,\dots,i_k=1}^{n} A^{(k)}_{i_1 \dots i_3}(y) D^{(k)}_{i_1 \dots i_3} G_2(z_2,y) \Big) &~\D y\\
    =\int\limits_{\O_1} U(y,z_2) u_1(y) u_0(y) ~\D y &= 0,  \quad \forall u_1, u_0 \in \Ec_1.    \end{aligned}\end{equation}
Note that, here we cannot use the previous method to replace $G_2$ by some $u_2 \in \Ec_1$, since Lemma~\ref{Runge} holds only in the $L^2(\O_1)$ topology. To handle this, we integrate by parts each term in \eqref{IBP} as long as $G_2$ is free of derivatives. We remark that in doing so, no boundary terms appear, which can be seen as follows. Note that $\PD \O_1 = \lb \PD \O_1 \cap \PD \O \rb \cup \lb \PD \O_1 \cap \O \rb$. The integral on $\lb \PD \O_1 \cap \PD \O \rb$ is zero, since any term having at least one of $u_0, u_1, G_2$ along with their normal derivatives vanish on $\lb \PD \O_1 \cap \PD \O_2 \rb \supset \lb \PD \O_1 \cap \PD \O \rb$. The integral on the second part $\lb \PD \O_1 \cap \PD \O \rb$ is zero since the coefficients vanish on $\Theta_\epsilon(t) \supset \lb \O_1^c \cap \O \rb$, and since the coefficients are smooth, the coefficients and all their derivatives vanish on $\PD \O_1 \cap \O = \PD (\O_1^c) \cap \O$. 
Once the $G_2$ term is free of derivatives, we multiply a function from $C^{\infty}_c(\O_2\setminus\overline{\O_1})$ to use Lemma~\ref{Runge} and shift back the derivatives using integration by parts to get
    \[
    0 = \int\limits_{\O_1} u_0(y) u_1(y) \Big ( \sum_{k=0}^{3} \sum_{i_1,\dots,i_k=1}^{n} A^{(k)}_{i_1 \dots i_3}(y) D^{(k)}_{i_1 \dots i_3}u_2(y) \Big) ~\D y, \quad \forall u_0, u_1, u_2 \in \Ec_1.
    \]
    
    By Lemma~\ref{local} now applied to $\O_1$, the above identity now implies that $A^{(k)}$, $k=0,\dots,3$, vanish on a neighborhood of each point of $\PD \O_1 \setminus \lb \PD \O_1 \cap \PD \O_2 \rb$. This implies that the coefficients vanish on a slightly bigger set $\Theta_\epsilon(\tau)$, for some $\tau > t$. Therefore, we have shown that $I_{\epsilon}$ is open.
\end{proof}
\begin{remark}
    In comparison to the higher order density result in \cite{Katya-Uhlmann-gradient-nonlinearities}, the above proof uses only $L^2(\O_1)$ density result as in Lemma \ref{Runge} and uses integration by parts techniques to avoid derivatives over the Green's function $G_2$. Perhaps one can prove a $H^3(\O_1)$ density result, which can provide an alternative proof of Theorem~\ref{MR2}. Nevertheless, the authors believe that the techniques developed above are more direct and can be easily generalized for higher order in appropriate setting.
\end{remark}

\section{Application to inverse problems}\label{IP}
In this section, we prove Theorem ~\ref{Main-thm}.
The idea of the proof broadly follows techniques developed in \cite{BKSU2023} combined with Theorems \ref{MR1} and \ref{MR2}. We first linearize the semilinear problem \eqref{Dirichlet_Problem} about $z=0$ and obtain linear PDEs involving $z$-derivatives of the coefficients $A^{(l)}$. The next step is to use the linearized DN-map to derive integral identities, as in Theorem \ref{MR1}, involving the differences of the derivatives of the unknown coefficients at $z=0$. Finally, using analyticity of the coefficients in the $z$ variable and the density result Theorem \ref{MR1} we complete the proof of Theorem \ref{Main-thm}.
Let us recall from the statement of Theorem \ref{Main-thm} that, we have two Dirichlet problems
\begin{equation}\label{Forward_Prob}
\begin{aligned}
\begin{cases}
\mathcal{L}_{A^{(0)},A^{(1)},A^{(2)},A^{(3)}}u= 0,\quad \mbox{in }\O,\\
(u,\p_{\nu}u) = (f,g), \quad \mbox{on }\p\O.
\end{cases}
\mbox{ and}\quad 
\begin{cases}
\mathcal{L}_{\wt{A}^{(0)}\wt{A}^{(1)},\wt{A}^{(2)},\wt{A}^{(3)}}\wt{u}= 0,\quad \mbox{in }\O,\\
(\wt{u},\p_{\nu}\wt{u}) = (f,g), \quad \mbox{on }\p\O.
\end{cases}
\end{aligned}
\end{equation}
Furthermore, we have assumed $A^{(l)},\wt{A}^{(l)}$ satisfy Assumption~\ref{Assumption}. That is
\begin{equation}\label{holomorphic}
\begin{gathered}
A^{(l)}(\cdot,z) = \sum_{k=1}^{\infty} \frac{1}{k !}z^{k} \p_z^{k} A^{(l)}(\cdot,0); \quad 
\wt{A}^{(l)}(\cdot,z) = \sum_{k=1}^{\infty} \frac{1}{k !}z^{k} \p_z^{k} \wt{A}^{(l)}(\cdot,0),
\quad l=0,1,2,3;
\end{gathered}
\end{equation}
converges uniformly and absolutely in $\O$, for all $z \in \C$. Recall that, for $f,g \in V_{\delta}(\p\O)$ there exist unique solutions $u,\wt{u} \in C^{4,\A}(\overline{\O})$ of \eqref{Forward_Prob}.
We write $A^{(l),k}(\cdot)$ and $\wt{A}^{(l),k}(\cdot)$ to denote $\p_z^kA^{(l)}(\cdot,z)|_{z=0}$ and $\p_z^k\wt{A}^{(l)}(\cdot,z)|_{z=0}$ respectively, for $k=0,1,\cdots$ and $l=0,1,2,3$.
From the statement of Theorem \ref{Main-thm} we further obtain that
\begin{equation}\label{DN_map}
	(u-\wt{u})|_{\p\O} = 0 = \p_{\nu} (u-\wt{u})|_{\p\O} \quad 
	\implies \,
	\p^2_{\nu} u|_{\Gamma^c} = \p^2_{\nu} \wt{u}|_{\Gamma^c} \mbox{ and } \p^3_{\nu} u|_{\Gamma^c} = \p^3_{\nu} \wt{u}|_{\Gamma^c}.
\end{equation}

\subsection{Multi-linearization}
We start with a first order and a second order linearization of \eqref{Forward_Prob} to obtain linear PDEs with coefficients $\p_z^1A^{(l)}(\cdot,0)$ and $\p_z^1\widetilde{A}^{(l)}(\cdot,0)$ along with linearized DN maps.
We solve the corresponding inverse problem for the linearized equations to obtain \begin{equation}\label{Hyp_3}
\p_z^1A^{(l)}(\cdot,0)=\p_z^1\widetilde{A}^{(l)}(\cdot,0) \quad \mbox{for } l=0,\cdots,3 \quad \mbox{in } \overline{\O}.
\end{equation}
Then we perform a third order linearization of \eqref{Forward_Prob} and use \eqref{Hyp_3} to obtain linear PDEs wih coefficients $\p_z^2A^{(l)}(\cdot,0)$ and $\p_z^2\widetilde{A}^{(l)}(\cdot,0)$ for $l=0,\cdots,3$ which helps us to conclude that $\p_z^2A^{(l)}(\cdot,0)=\p_z^2\widetilde{A}^{(l)}(\cdot,0)$ for $l=0,\cdots,3$ in $\overline{\O}$.

So, inductively, given 
\begin{equation}\label{Hyp_m-2}
\p_z^{k}A^{(l)}(\cdot,0)=\p_z^{k}\widetilde{A}^{(l)}(\cdot,0) \quad \mbox{for } k=0,1,\cdots,m-2 \mbox{ and } l=0,\cdots,3, \quad \mbox{in } \overline{\O},
\end{equation}
we linearize \eqref{Forward_Prob} of $m$-th order and show $\p_z^{m-1}A^{(l)}(\cdot,0)=\p_z^{m-1}\widetilde{A}^{(l)}(\cdot,0)$ for $l=0,\cdots,3$ in $\overline{\O}$. Note that, we can start the induction from $m=2$ because we know the hypothesis \eqref{Hyp_m-2} when $m-2=0$.
(This indicates why we must take $m\geq 2$.)

Let us fix $m\geq 2$ and consider $\varepsilon := (\varepsilon_{1}, \cdots, \varepsilon_{m})\in \R^{m}$. Consider the Dirichlet problem \eqref{Dirichlet_Problem} with
\[
f=\sum_{k=1}^{m} \varepsilon_{k}f_{k}, \quad 
g=\sum_{k=1}^{m} \varepsilon_{k}g_{k},
\quad \mbox{ where } f_k,g_k\in C^{\infty}(\PD\O),
\quad f_k|_{\Gamma}, g_k|_{\Gamma} = 0,
\]
for $k=1,2,\cdots,m$.
Hence, for $|\ve|$ sufficiently small, $f,g \in V_{\delta}(\p\O)$.
Let $u:=u(x,\varepsilon)$ be the unique solution (see Theorem \ref{th:well_posedness}) to the following Dirichlet problem
\begin{equation}\label{Eq:1}
\begin{aligned}
(-\Delta)^2u+\sum_{l=0}^{3} \sum_{i_{1},\cdots,i_{l}=1}^n \big( \sum_{k=1}^\infty \partial_z^kA^{(l)}_{i_{1},\cdots, i_{l}}(x;0)\frac{u^k}{k!} \big) D^{(l)}_{i_{1}\dots i_{l}}u
= 0, \quad &\text{in } \Omega,\\
\left(u, \PD_{\nu} u\right)=\left( \sum_{k=1}^{m}\ve_k f_k, \sum_{k=1}^{m}\ve_k g_k \right),\quad &\text {on } \partial\Omega.
\end{aligned}
\end{equation}
Differentiating \eqref{Eq:1} with respect to $\varepsilon_{k}$, $k= 1, \cdots,m$, taking $\varepsilon=0$, and using that $u (x, 0) = 0$, we get
\begin{equation}\label{Eq:2}
\begin{aligned}
\begin{cases}
(-\Delta)^2v_k=0 &  \text{ in } \Omega,\\
\left(v_k, \PD_{\nu} v_k\right) =\left(f_{k}, g_{k}\right) & \text { on } \partial\Omega,
\end{cases}
\end{aligned}
\end{equation}
where $v_k:=\partial_{\varepsilon_k} u|_{\varepsilon=0}$.
Treating the operator $\Lc_{\wt{A}^{(0)},\wt{A}^{(1)},\wt{A}^{(2)}, \wt{A}^{(3)}}$ similarly, for the same Dirichlet data $f,g \in V_{\delta}(\p\O)$ we obtain unique solution $\wt{u} \in C^{4,\A}(\overline{\O})$ of the Dirichlet problem 
\begin{equation}\label{Eq:1-1}
\Lc_{\wt{A}^{(0)},\wt{A}^{(1)},\wt{A}^{(2)}, \wt{A}^{(3)}}\wt{u}=0 \quad \mbox{in }\O \quad \mbox{and}\quad  (\wt{u},\p_{\nu}\wt{u})|_{\p\O} = (f,g).
\end{equation}
Upon differentiating the above equation with respect to $\ve_k$ and taking $\ve=0$ we see that $\wt{v_k}:=\partial_{\varepsilon_k} \wt{u}|_{\ve=0}$ solves \eqref{Eq:2}. By the uniqueness and the regularity of elliptic PDEs, it follows that $v_k = \wt{v_k} \in C^{\infty}(\overline{\Omega})$, for $k=1,\cdots,m$.

Observe that, the first order linearization \eqref{Eq:2} of the Dirichlet problem \eqref{Dirichlet_Problem} does not retain any information about the lower order perturbations, which is expected due to Assumption \ref{Asmp_2}.

Therefore, we move on to higher order linearizations of \eqref{Eq:1}. 
Assume \eqref{Hyp_m-2} for $m\geq 2$ and apply $\p^m_{\ve_1\cdots \ve_m}|_{\ve=0}$ to \eqref{Eq:1}. We write $w_m=\p^m_{\ve_1\cdots\ve_m}u|_{\ve=0}$
to obtain the $m$-th order linearized equation
\begin{equation}\label{Eq:m}
\begin{aligned}
(-\Delta)^2w_m + \sum_{l=0}^3 \sum_{i_1,\dots, i_l=1}^n  
A^{(l),m-1}_{i_1\dots i_l}
\left(\sum_{j=1}^m D^{(l)}_{i_1\cdots i_l}v_j \prod_{\substack{r=1\\r\neq j}}^m v_r \right) + H_m=0 &  \text{ in } \Omega,\\
\left(w_m,\p_{\nu} w_m\right)=(0,0) & \text{ on } \p \Omega,
\end{aligned}
\end{equation}
where $H_m$ consists of terms $A^{(l),k}$ for $k=0,\cdots,m-2$ given as
\[\begin{aligned}
H_m=\p_{\varepsilon_1}\dots\p_{\varepsilon_m}
\bigg(\sum_{l=0}^3 \sum_{i_1,\dots, i_l=1}^n \big( \sum_{k=1}^{m-2} A^{(l), k}_{i_1\dots i_l}(x)\frac{u^k}{k!} \big) D^{(l)}_{i_1\dots i_l}u
\bigg)\bigg|_{\varepsilon=0}.
\end{aligned}
\]
Dealing with \eqref{Eq:1-1} similarly, we obtain
\begin{equation}\label{Eq:m-1}
\begin{aligned}
(-\Delta)^2\widetilde{w}_m + \sum_{l=0}^3 \sum_{i_1,\dots, i_l=1}^n  
\widetilde{A}^{(l),m-1}_{i_1\dots i_l}
\left(\sum_{j=1}^m D^{(l)}_{i_1\cdots i_l}v_j \prod_{\substack{r=1\\r\neq j}}^m v_r \right) + \widetilde{H}_m=0 &  \text{ in } \Omega,\\
\left(\widetilde{w}_m,\p_{\nu} \widetilde{w}_m\right)=(0,0) & \text{ on } \p \Omega,\\
\mbox{with } \quad 
\widetilde{H}_m=\p_{\varepsilon_1}\dots\p_{\varepsilon_m}
\bigg(\sum_{l=0}^3 \sum_{i_1,\dots, i_l=1}^n \big( \sum_{k=1}^{m-2} \widetilde{A}^{(l), k}_{i_1\dots i_l}(x)\frac{\widetilde{u}^k}{k!} \big) D^{(l)}_{i_1\dots i_l}\widetilde{u}
\bigg)\bigg|_{\varepsilon=0}
\end{aligned}
\end{equation}
where $\widetilde{w}_m=\p^m_{\ve_1\cdots\ve_m}\widetilde{u}|_{\ve=0}$ and $\widetilde{A}^{(l),k}(\cdot) = \partial_{z}^k\widetilde{A}^{(l)}(\cdot;0)$ in $\O$.
Note that, for $m=2$, due to Assumption
\ref{Asmp_2} we have $H_m=0=\widetilde{H}_m$. For $m>2$, we
use the induction hypothesis \eqref{Hyp_m-2} and the well-posedness of the linear biharmonic operators to obtain $\partial^k_{{\varepsilon_{j_1}}\cdots{\varepsilon_{j_k}}} (u-\widetilde{u})|_{\varepsilon = 0} = 0$ for $2\leq k < m$ and $H_m=\widetilde{H}_m$ in $\Omega$ (see also \cite{BKSU2023}).
Therefore, taking the difference of \eqref{Eq:m} and \eqref{Eq:m-1} we obtain
\begin{equation}\label{Eq:4}
\begin{aligned}
(-\Delta)^2(\wt{w}_m-w_m)
=\sum_{l=0}^3 \sum_{i_1,\dots, i_l=1}^n W^{(l), m-1}_{i_1\dots i_l} \, \sum_{j=1}^m D^{(l)}_{i_1\cdots i_l}v_j \prod_{\substack{r=1\\r\neq j}}^m v_r, \quad \mbox{in } \O,
\end{aligned}
\end{equation}
where $W^{(l), m-1}_{i_1\dots i_l}:=\left(A^{(l), m-1}_{i_1\dots i_l}-\wt{A}^{(l),m-1}_{i_1\dots i_l}\right)$ for $l=0,1,2,3$ for $m\geq 1$.
Note that, \eqref{DN_map} implies $(w_m-\wt{w}_m)|_{\p\O} = 0 = \p_{\nu}(w_m-\wt{w}_m)|_{\p\O}$ and $\p^2_{\nu}(w_m-\wt{w}_m)|_{\Gamma^c}=\p^3_{\nu}(w_m-\wt{w}_m)|_{\Gamma^c}$.
Recall the set $\Ec_{\Gamma}$ as in \eqref{eq_Ec} and observe that the due to assumptions of Theorem \ref{Main-thm}, $v_1,\cdots,v_m \in \mathcal{E}_{\Gamma}$.
Multiplying \eqref{Eq:4} by $v_0\in \Ec_{\Gamma}$ and performing integration over $\Omega$ we get
\begin{equation}\label{last}
\begin{aligned}
\sum_{l=0}^3 \sum_{i_1,\dots, i_l=1}^n \int_{\Omega} W^{(l), m-1}_{i_1\dots i_l}(x) \, \left(\sum_{j=1}^m D^{(l)}_{i_1\cdots i_l}v_j(x) \prod_{\substack{r=1\\r\neq j}}^m v_r(x)\right) v_0(x)\, dx=&0.
\end{aligned}
\end{equation}
Following the notation in \eqref{Op_Int} let us define
\[	\Wc_{m}^{m-1}(v_0,v_1,\cdots,v_m) := \sum_{l=0}^3 \sum_{i_1,\dots, i_l=1}^n W^{(l), m-1}_{i_1\dots i_l} \, \left(\sum_{j=1}^m D^{(l)}_{i_1\cdots i_l}v_j \prod_{\substack{r=1\\r\neq j}}^m v_r\right) v_0.
\]
Now, \eqref{last} implies
\[	\int_{\O} \Wc_{m}^{m-1}(v_0,v_1,\cdots,v_m) \, dx = 0, \quad \forall v_i \in \Ec_{\Gamma}, \quad i=0,\cdots,m.
\]
Therefore, applying Theorem~\ref{MR1}, we conclude that
\[
A^{(l),m-1}=\wt{A}^{(l),m-1}, \quad l=0,1,2,3, \quad
\text { in } \Omega,
\]
which completes the mathematical induction.
Thus, we have
\[
\partial_z^{m-1} A^{(l)}(x;0)= A^{(l),m-1}=\wt{A}^{(l),m-1} = \partial_z^{m-1}\wt{A}^{(l)}(x;0), \quad
\text { in } \Omega,
\]
for each $l=0,1,2,3$ and for all $m=1,2,\cdots$.

Finally, using analyticity of the coefficients $A^{(l)}(\cdot;z)$ and $\widetilde{A}^{(l)}(\cdot;z)$ in the $z$ variable as in \eqref{holomorphic}, we conclude the proof of Theorem \ref{Main-thm}.
\qed

\section*{Acknowledgment}
S.B. was partially supported by SERB grant SRG/2022/001298. This work was initiated when DA visited IISER-Bhopal during the spring of 2023. DA thanks the Department of Mathematics for the hospitality during the stay. PK was supported by the Senior Research Fellowship from IISER Bhopal.


\def\dbar{\leavevmode\hbox to 0pt{\hskip.2ex \accent"16\hss}d}
\begin{bibdiv}
	\begin{biblist}
		
		\bib{agrawal2023linearized}{article}{
			author={Agrawal, Divyansh},
			author={Jaiswal, Ravi~Shankar},
			author={Sahoo, Suman~Kumar},
			title={The linearized partial data {C}alder\'{o}n problem for
				{B}iharmonic operators},
			date={2024},
			ISSN={0362-546X,1873-5215},
			journal={Nonlinear Anal.},
			volume={244},
			pages={Paper No. 113544},
			url={https://doi.org/10.1016/j.na.2024.113544},
			review={\MR{4734533}},
		}
		
		\bib{Assylbekov_Yang_polyharmonic}{article}{
			author={Assylbekov, Yernat~M.},
			author={Yang, Yang},
			title={Determining the first order perturbation of a polyharmonic
				operator on admissible manifolds},
			date={2017},
			ISSN={0022-0396,1090-2732},
			journal={J. Differential Equations},
			volume={262},
			number={1},
			pages={590\ndash 614},
			url={https://doi.org/10.1016/j.jde.2016.09.039},
			review={\MR{3567495}},
		}
		
		\bib{Bhattacharyya_partial2018}{article}{
			author={Bhattacharyya, Sombuddha},
			title={An inverse problem for the magnetic {S}chr\"odinger operator on
				{R}iemannian manifolds from partial boundary data},
			date={2018},
			ISSN={1930-8337,1930-8345},
			journal={Inverse Probl. Imaging},
			volume={12},
			number={3},
			pages={801\ndash 830},
			url={https://doi.org/10.3934/ipi.2018034},
			review={\MR{3810180}},
		}
		
		\bib{Bhattacharyya-Ghosh_polyharmonic}{article}{
			author={Bhattacharyya, Sombuddha},
			author={Ghosh, Tuhin},
			title={Inverse boundary value problem of determining up to a second
				order tensor appear in the lower order perturbation of a polyharmonic
				operator},
			date={2019},
			ISSN={1069-5869,1531-5851},
			journal={J. Fourier Anal. Appl.},
			volume={25},
			number={3},
			pages={661\ndash 683},
			url={https://doi.org/10.1007/s00041-018-9625-3},
			review={\MR{3953481}},
		}
		
		\bib{Bhattacharyya-Ghosh_biharmonic}{article}{
			author={Bhattacharyya, Sombuddha},
			author={Ghosh, Tuhin},
			title={An inverse problem on determining second order symmetric tensor
				for perturbed biharmonic operator},
			date={2022},
			ISSN={0025-5831,1432-1807},
			journal={Math. Ann.},
			volume={384},
			number={1-2},
			pages={457\ndash 489},
			url={https://doi.org/10.1007/s00208-021-02276-6},
			review={\MR{4476229}},
		}
		
		\bib{Bhattacharyya-Krishnan-Sahoo_polyharmonic}{article}{
			author={Bhattacharyya, Sombuddha},
			author={Krishnan, Venkateswaran~P.},
			author={Sahoo, Suman~K.},
			title={Momentum ray transforms and a partial data inverse problem for a
				polyharmonic operator},
			date={2023},
			ISSN={0036-1410,1095-7154},
			journal={SIAM J. Math. Anal.},
			volume={55},
			number={4},
			pages={4000\ndash 4038},
			url={https://doi.org/10.1137/22M1500617},
			review={\MR{4631015}},
		}
		
		\bib{BKSU2023}{article}{
			author={Bhattacharyya, Sombuddha},
			author={Krupchyk, Katya},
			author={Sahoo, Suman~Kumar},
			author={Uhlmann, Gunther},
			title={Inverse problems for third-order nonlinear perturbations of
				biharmonic operators},
			date={2024},
			journal={Comm. PDE.},
		}
		
		\bib{Bhattacharyya-Kumar_local_data_polyharmonic}{article}{
			author={Bhattacharyya, Sombuddha},
			author={Kumar, Pranav},
			title={Local data inverse problem for the polyharmonic operator with
				anisotropic perturbations},
			date={2024},
			ISSN={0266-5611,1361-6420},
			journal={Inverse Problems},
			volume={40},
			number={5},
			pages={Paper No. 055004, 22},
			review={\MR{4723844}},
		}
		
		\bib{Ali_Catalin_1}{article}{
			author={C\^arstea, C\u at\u alin~I.},
			author={Feizmohammadi, Ali},
			title={An inverse boundary value problem for certain anisotropic
				quasilinear elliptic equations},
			date={2021},
			ISSN={0022-0396,1090-2732},
			journal={J. Differential Equations},
			volume={284},
			pages={318\ndash 349},
			url={https://doi.org/10.1016/j.jde.2021.02.044},
			review={\MR{4227095}},
		}
		
		\bib{Ali_Catalin_2}{article}{
			author={C\^arstea, C\u at\u alin~I.},
			author={Feizmohammadi, Ali},
			title={A density property for tensor products of gradients of harmonic
				functions and applications},
			date={2023},
			ISSN={0022-1236,1096-0783},
			journal={J. Funct. Anal.},
			volume={284},
			number={2},
			pages={Paper No. 109740, 30},
			url={https://doi.org/10.1016/j.jfa.2022.109740},
			review={\MR{4500731}},
		}
		
		\bib{Ali_Cataline_3}{article}{
			author={C\^arstea, C\u at\u alin~I.},
			author={Feizmohammadi, Ali},
			author={Kian, Yavar},
			author={Krupchyk, Katya},
			author={Uhlmann, Gunther},
			title={The {C}alder\'on inverse problem for isotropic quasilinear
				conductivities},
			date={2021},
			ISSN={0001-8708,1090-2082},
			journal={Adv. Math.},
			volume={391},
			pages={Paper No. 107956, 31},
			url={https://doi.org/10.1016/j.aim.2021.107956},
			review={\MR{4300916}},
		}
		
		\bib{Mihajlo2020}{article}{
			author={Ceki\'c, Mihajlo},
			title={Calder\'on problem for {Y}ang-{M}ills connections},
			date={2020},
			ISSN={1664-039X,1664-0403},
			journal={J. Spectr. Theory},
			volume={10},
			number={2},
			pages={463\ndash 513},
			url={https://doi.org/10.4171/JST/302},
			review={\MR{4107522}},
		}
		
		\bib{Chung_partial2014}{article}{
			author={Chung, Francis~J.},
			title={A partial data result for the magnetic {S}chr\"odinger inverse
				problem},
			date={2014},
			ISSN={2157-5045,1948-206X},
			journal={Anal. PDE},
			volume={7},
			number={1},
			pages={117\ndash 157},
			url={https://doi.org/10.2140/apde.2014.7.117},
			review={\MR{3219502}},
		}
		
		\bib{DS}{article}{
			author={Dairbekov, N.~S.},
			author={Sharafutdinov, V.~A.},
			title={Conformal {K}illing symmetric tensor fields on {R}iemannian
				manifolds},
			date={2010},
			ISSN={1560-750X},
			journal={Mat. Tr.},
			volume={13},
			number={1},
			pages={85\ndash 145},
			url={https://doi.org/10.3103/s1055134411010019},
			review={\MR{2682769}},
		}
		
		\bib{DKJU2007}{article}{
			author={Dos Santos~Ferreira, David},
			author={Kenig, Carlos~E.},
			author={Sj\"ostrand, Johannes},
			author={Uhlmann, Gunther},
			title={Determining a magnetic {S}chr\"odinger operator from partial
				{C}auchy data},
			date={2007},
			ISSN={0010-3616,1432-0916},
			journal={Comm. Math. Phys.},
			volume={271},
			number={2},
			pages={467\ndash 488},
			url={https://doi.org/10.1007/s00220-006-0151-9},
			review={\MR{2287913}},
		}
		
		\bib{DSFKSU}{article}{
			author={Dos Santos~Ferreira, David},
			author={Kenig, Carlos~E.},
			author={Sj\"{o}strand, Johannes},
			author={Uhlmann, Gunther},
			title={On the linearized local {C}alder\'{o}n problem},
			date={2009},
			ISSN={1073-2780},
			journal={Math. Res. Lett.},
			volume={16},
			number={6},
			pages={955\ndash 970},
			url={https://doi.org/10.4310/MRL.2009.v16.n6.a4},
			review={\MR{2576684}},
		}
		
		\bib{evans}{book}{
			author={Evans, Lawrence~C.},
			title={Partial differential equations},
			edition={Second},
			series={Graduate Studies in Mathematics},
			publisher={American Mathematical Society, Providence, RI},
			date={2010},
			volume={19},
			ISBN={978-0-8218-4974-3},
			url={https://doi.org/10.1090/gsm/019},
			review={\MR{2597943}},
		}
		
		\bib{Feizmohammadi_Oksanen_2020}{article}{
			author={Feizmohammadi, Ali},
			author={Oksanen, Lauri},
			title={An inverse problem for a semi-linear elliptic equation in
				{R}iemannian geometries},
			date={2020},
			ISSN={0022-0396,1090-2732},
			journal={J. Differential Equations},
			volume={269},
			number={6},
			pages={4683\ndash 4719},
			url={https://doi.org/10.1016/j.jde.2020.03.037},
			review={\MR{4104456}},
		}
		
		\bib{Gazzola_Polyharmonic_book_2010}{book}{
			author={Gazzola, Filippo},
			author={Grunau, Hans-Christoph},
			author={Sweers, Guido},
			title={Polyharmonic boundary value problems},
			series={Lecture Notes in Mathematics},
			publisher={Springer-Verlag, Berlin},
			date={2010},
			volume={1991},
			ISBN={978-3-642-12244-6},
			url={https://doi.org/10.1007/978-3-642-12245-3},
			note={Positivity preserving and nonlinear higher order elliptic
				equations in bounded domains},
			review={\MR{2667016}},
		}
		
		\bib{Ghosh_Venky2016}{article}{
			author={Ghosh, Tuhin},
			author={Krishnan, Venkateswaran~P.},
			title={Determination of lower order perturbations of the polyharmonic
				operator from partial boundary data},
			date={2016},
			ISSN={0003-6811,1563-504X},
			journal={Appl. Anal.},
			volume={95},
			number={11},
			pages={2444\ndash 2463},
			url={https://doi.org/10.1080/00036811.2015.1092522},
			review={\MR{3546596}},
		}
		
		\bib{Isakov2007}{article}{
			author={Isakov, Victor},
			title={On uniqueness in the inverse conductivity problem with local
				data},
			date={2007},
			ISSN={1930-8337,1930-8345},
			journal={Inverse Probl. Imaging},
			volume={1},
			number={1},
			pages={95\ndash 105},
			url={https://doi.org/10.3934/ipi.2007.1.95},
			review={\MR{2262748}},
		}
		
		\bib{KSU2007}{article}{
			author={Kenig, Carlos~E.},
			author={Sj\"ostrand, Johannes},
			author={Uhlmann, Gunther},
			title={The {C}alder\'on problem with partial data},
			date={2007},
			ISSN={0003-486X,1939-8980},
			journal={Ann. of Math. (2)},
			volume={165},
			number={2},
			pages={567\ndash 591},
			url={https://doi.org/10.4007/annals.2007.165.567},
			review={\MR{2299741}},
		}
		
		\bib{Kian-katya-Uhlmann_quasilinear_conductivity}{article}{
			author={Kian, Yavar},
			author={Krupchyk, Katya},
			author={Uhlmann, Gunther},
			title={Partial data inverse problems for quasilinear conductivity
				equations},
			date={2023},
			ISSN={0025-5831,1432-1807},
			journal={Math. Ann.},
			volume={385},
			number={3-4},
			pages={1611\ndash 1638},
			url={https://doi.org/10.1007/s00208-022-02367-y},
			review={\MR{4566701}},
		}
		
		\bib{KLU_biharmonic2012}{article}{
			author={Krupchyk, Katsiaryna},
			author={Lassas, Matti},
			author={Uhlmann, Gunther},
			title={Determining a first order perturbation of the biharmonic operator
				by partial boundary measurements},
			date={2012},
			ISSN={0022-1236,1096-0783},
			journal={J. Funct. Anal.},
			volume={262},
			number={4},
			pages={1781\ndash 1801},
			url={https://doi.org/10.1016/j.jfa.2011.11.021},
			review={\MR{2873860}},
		}
		
		\bib{KLU_polyharmonic2014}{article}{
			author={Krupchyk, Katsiaryna},
			author={Lassas, Matti},
			author={Uhlmann, Gunther},
			title={Inverse boundary value problems for the perturbed polyharmonic
				operator},
			date={2014},
			ISSN={0002-9947,1088-6850},
			journal={Trans. Amer. Math. Soc.},
			volume={366},
			number={1},
			pages={95\ndash 112},
			url={https://doi.org/10.1090/S0002-9947-2013-05713-3},
			review={\MR{3118392}},
		}
		
		\bib{Katya-Uhlmann-gradient-nonlinearities}{article}{
			author={Krupchyk, Katya},
			author={Uhlmann, Gunther},
			title={Partial data inverse problems for semilinear elliptic equations
				with gradient nonlinearities},
			date={2020},
			ISSN={1073-2780,1945-001X},
			journal={Math. Res. Lett.},
			volume={27},
			number={6},
			pages={1801\ndash 1824},
			url={https://doi.org/10.4310/MRL.2020.v27.n6.a10},
			review={\MR{4216606}},
		}
		
		\bib{Katya-Uhlmann-semilinear-elliptic}{article}{
			author={Krupchyk, Katya},
			author={Uhlmann, Gunther},
			title={A remark on partial data inverse problems for semilinear elliptic
				equations},
			date={2020},
			ISSN={0002-9939,1088-6826},
			journal={Proc. Amer. Math. Soc.},
			volume={148},
			number={2},
			pages={681\ndash 685},
			url={https://doi.org/10.1090/proc/14844},
			review={\MR{4052205}},
		}
		
		\bib{KU_CTA23}{article}{
			author={Krupchyk, Katya},
			author={Uhlmann, Gunther},
			title={Inverse problems for nonlinear magnetic {S}chr\"{o}dinger
				equations on conformally transversally anisotropic manifolds},
			date={2023},
			ISSN={2157-5045},
			journal={Anal. PDE},
			volume={16},
			number={8},
			pages={1825\ndash 1868},
			url={https://doi.org/10.2140/apde.2023.16.1825},
			review={\MR{4657146}},
		}
		
		\bib{KUY_complex24}{article}{
			author={Krupchyk, Katya},
			author={Uhlmann, Gunther},
			author={Yan, Lili},
			title={A remark on inverse problems for nonlinear magnetic
				{S}chr\"{o}dinger equations on complex manifolds},
			date={2024},
			ISSN={0002-9939},
			journal={Proc. Amer. Math. Soc.},
			volume={152},
			number={6},
			pages={2413\ndash 2422},
			url={https://doi.org/10.1090/proc/16060},
			review={\MR{4741238}},
		}
		
		\bib{KLU18}{article}{
			author={Kurylev, Yaroslav},
			author={Lassas, Matti},
			author={Uhlmann, Gunther},
			title={Inverse problems for {L}orentzian manifolds and non-linear
				hyperbolic equations},
			date={2018},
			ISSN={0020-9910},
			journal={Invent. Math.},
			volume={212},
			number={3},
			pages={781\ndash 857},
			url={https://doi.org/10.1007/s00222-017-0780-y},
			review={\MR{3802298}},
		}
		
		\bib{lai2020partial}{article}{
			author={Lai, Ru-Yu},
			author={Zhou, Ting},
			title={Partial data inverse problems for nonlinear magnetic
				{S}chr\"{o}dinger equations},
			date={2023},
			ISSN={1073-2780},
			journal={Math. Res. Lett.},
			volume={30},
			number={5},
			pages={1535\ndash 1563},
			review={\MR{4747871}},
		}
		
		\bib{LLLS21}{article}{
			author={Lassas, Matti},
			author={Liimatainen, Tony},
			author={Lin, Yi-Hsuan},
			author={Salo, Mikko},
			title={Inverse problems for elliptic equations with power type
				nonlinearities},
			date={2021},
			ISSN={0021-7824},
			journal={J. Math. Pures Appl. (9)},
			volume={145},
			pages={44\ndash 82},
			url={https://doi.org/10.1016/j.matpur.2020.11.006},
			review={\MR{4188325}},
		}
		
		\bib{LL23}{misc}{
			author={Liimatainen, Tony},
			author={Lin, Yi-Hsuan},
			title={Uniqueness results and gauge breaking for inverse source problems
				of semilinear elliptic equations},
			date={2023},
			url={https://arxiv.org/abs/2204.11774},
		}
		
		\bib{LLST_Fractional_22}{article}{
			author={Liimatainen, Tony},
			author={Lin, Yi-Hsuan},
			author={Salo, Mikko},
			author={Tyni, Teemu},
			title={Inverse problems for elliptic equations with fractional power
				type nonlinearities},
			date={2022},
			ISSN={0022-0396},
			journal={J. Differential Equations},
			volume={306},
			pages={189\ndash 219},
			url={https://doi.org/10.1016/j.jde.2021.10.015},
			review={\MR{4332042}},
		}
		
		\bib{MU20}{article}{
			author={Mu\~{n}oz, Claudio},
			author={Uhlmann, Gunther},
			title={The {C}alder\'{o}n problem for quasilinear elliptic equations},
			date={2020},
			ISSN={0294-1449},
			journal={Ann. Inst. H. Poincar\'{e} C Anal. Non Lin\'{e}aire},
			volume={37},
			number={5},
			pages={1143\ndash 1166},
			url={https://doi.org/10.1016/j.anihpc.2020.03.004},
			review={\MR{4138229}},
		}
		
		\bib{PSU}{book}{
			author={Paternain, Gabriel~P.},
			author={Salo, Mikko},
			author={Uhlmann, Gunther},
			title={Geometric inverse problems---with emphasis on two dimensions},
			series={Cambridge Studies in Advanced Mathematics},
			publisher={Cambridge University Press, Cambridge},
			date={2023},
			volume={204},
			ISBN={978-1-316-51087-2},
			note={With a foreword by Andr\'{a}s Vasy},
			review={\MR{4520155}},
		}
		
		\bib{Inverse_spectral_theory}{book}{
			author={P\"oschel, J\"urgen},
			author={Trubowitz, Eugene},
			title={Inverse spectral theory},
			series={Pure and Applied Mathematics},
			publisher={Academic Press, Inc., Boston, MA},
			date={1987},
			volume={130},
			ISBN={0-12-563040-9},
			review={\MR{894477}},
		}
		
		\bib{SahooSalo}{article}{
			author={Sahoo, Suman~Kumar},
			author={Salo, Mikko},
			title={The linearized {C}alder\'{o}n problem for polyharmonic
				operators},
			date={2023},
			ISSN={0022-0396,1090-2732},
			journal={J. Differential Equations},
			volume={360},
			pages={407\ndash 451},
			url={https://doi.org/10.1016/j.jde.2023.03.017},
			review={\MR{4562046}},
		}
		
		\bib{Salo_Tzou23}{article}{
			author={Salo, Mikko},
			author={Tzou, Leo},
			title={Inverse problems for semilinear elliptic {PDE} with measurements
				at a single point},
			date={2023},
			ISSN={0002-9939},
			journal={Proc. Amer. Math. Soc.},
			volume={151},
			number={5},
			pages={2023\ndash 2030},
			url={https://doi.org/10.1090/proc/16255},
			review={\MR{4556197}},
		}
		
		\bib{ziqi_Sun}{article}{
			author={Sun, Zi~Qi},
			title={An inverse boundary value problem for {S}chr\"odinger operators
				with vector potentials},
			date={1993},
			ISSN={0002-9947,1088-6850},
			journal={Trans. Amer. Math. Soc.},
			volume={338},
			number={2},
			pages={953\ndash 969},
			url={https://doi.org/10.2307/2154438},
			review={\MR{1179400}},
		}
		
	\end{biblist}
\end{bibdiv}

\end{document}